\newcommand{\eg}{e.g.}
\newcommand{\ie}{i.e.}
\newcommand{\HH}{{\mathbb{H}}}
\newcommand{\RR}{{\mathbb{R}}}
\newcommand{\tri}{T}
\newcommand{\M}{M}
\newcommand{\vol}{\mathrm{vol}}
\newcommand{\A}{\mathcal{A}}
\newcommand{\CYRLx}{L}
\newcommand{\tocite}[1]{}
\newcommand{\adddata}[1]{}%
\newcommand{\chs}{CHS}%
\newcommand{\snappy}{\texttt{SnapPy}}
\newcommand{\regina}{\texttt{Regina}}
\newtheorem{theorem}{Theorem}[section]
\newtheorem{lemma}[theorem]{Lemma}
\newtheorem{definition}{Definition}[section]
\newtheorem{remark}{Remark}[section]
\title{Localized geometric moves to compute hyperbolic structures on triangulated 3-manifolds}
\author{Cl\'ement Maria \and Owen Rouill\'e}
\begin{document}
	
	\maketitle

	\begin{abstract}
		A fundamental way to study 3-manifolds is through the geometric lens, one of the most prominent geometries being the hyperbolic one. We focus on the computation of a complete hyperbolic structure on a connected orientable hyperbolic 3-manifold with torus boundaries. This family of 3-manifolds includes the knot complements. 
		
		This computation of a hyperbolic structure requires the resolution of gluing equations on a triangulation of the space, but not all triangulations admit a solution to the equations.
		
		In this paper, we propose a new method to find a triangulation that admits a solution to the gluing equations, using convex optimization and localized combinatorial modifications. It is based on Casson and Rivin’s reformulation of the equations. We provide a novel approach to modify a triangulation and update its geometry, along with experimental results to support the new method.
	\end{abstract}

	\section{Introduction}
	
	A main problem of knot theory is to tell whether two knots are equivalent or distinct. Equivalence between knots is defined by the existence of an isotopy of the ambient space that would turn one knot into the other, \ie, a continuous deformation of the space that preserves the entanglement. 
	
	Isotopies are too difficult to compute in practice, and practitioners use {\it invariants} to tackle the knot equivalence problem. A {\it topological invariant} is a quantity assigned to a presentation of a knot, that is invariant by isotopy. 
	
	An important family of knots are the {\it hyperbolic knots}, which are the knots whose complements admit a {\it complete hyperbolic metric}. They are the subjects of active mathematical research, which motivates the introduction of efficient algorithmic tools to study their geometric properties, and most notably their {\it hyperbolic volume}.  The hyperbolic volume of a hyperbolic knot is a topological invariant which is powerful at distinguishing between non-equivalent knots, is non-trivial to compute, and is at the heart of several deep conjectures in topology~\cite{Murakamivolconj}. 
	
	If it exists, the complete hyperbolic metric on a 3-manifold is unique\cite{Mostowrigid}, and may be combinatorially represented by a \emph{complete hyperbolic structure} (\chs). In order to compute geometric properties (such as volume) of a hyperbolic knot, one triangulates the knot complement, and try to assign hyperbolic shapes to its tetrahedra. If these {\it shapes} verify a set of non-linear constraints called the {\it gluing equations}, they form a \chs\ and encode the complete hyperbolic metric of the space. 
	A major issue with this approach is that a solution to the constraints may not exist on all triangulations of a manifold, even if, as topological object, the manifold can carry a complete hyperbolic metric. Worst, it is not known whether every hyperbolic 3-manifold admits a triangulation on which a solution as \chs\ exists. 
	
	In practice, given an input knot, software can construct a triangulation of the knot complement, and simplify it to have a \chs. But if this fails, the only implemented practical solution, proposed by \snappy\cite{snappy}, is to randomly modify and simplify the triangulation before trying again until a \chs\ is found. 
	
	\begin{table}
		\begin{tabular}{|r|c|c|c|c|c|c|c|c|c|c|c|c|}
			\hline
			& \multicolumn{6}{|c|}{Alternating} & \multicolumn{6}{|c|}{Non-Alternating} \\
			\hline
			\#crossings & 12 & 13 & 14 & 15 & 16 & 17 & 12 & 13 & 14 & 15 & 16 & 17 \\
			\hline
			{\footnotesize\% Failure on first try} & 0.9 & 1.6 & 2.4 & 3.3 & 4.4 & 5.7 & 0.8 & 0.7 & 1.2 & 1.7 & 2.4 & 3.3 \\  
			\hline
			{\scriptsize Expected nb of retriang.} & 2.9 & 3.9 & 6.2 & 7.1 & 9.5 & 15.9 & 2.0 & 3.1 & 6.1 & 11.8 & 9.7 & 13.5 \\
			\hline
		\end{tabular}
		
		\caption{On all $\sim 9.7$ millions prime knots with crossing numbers ranging from 12 to 17, alternating and non-alternating, we indicate ({\it \% Failure on first try}) the percentage of knot complements (after triangulation and simplification) on which \snappy{} fails to compute a \chs\ on first try. We also indicate ({\it Highest expected nb of retriang.}) the highest, over all knots, expected number of random re-triangulations necessary for \snappy{} to succeed finding a \chs. 
		}
		\label{tab:reginacensus}
		
	\end{table}

	\begin{figure}
		\centering
		\includegraphics[scale=0.6]{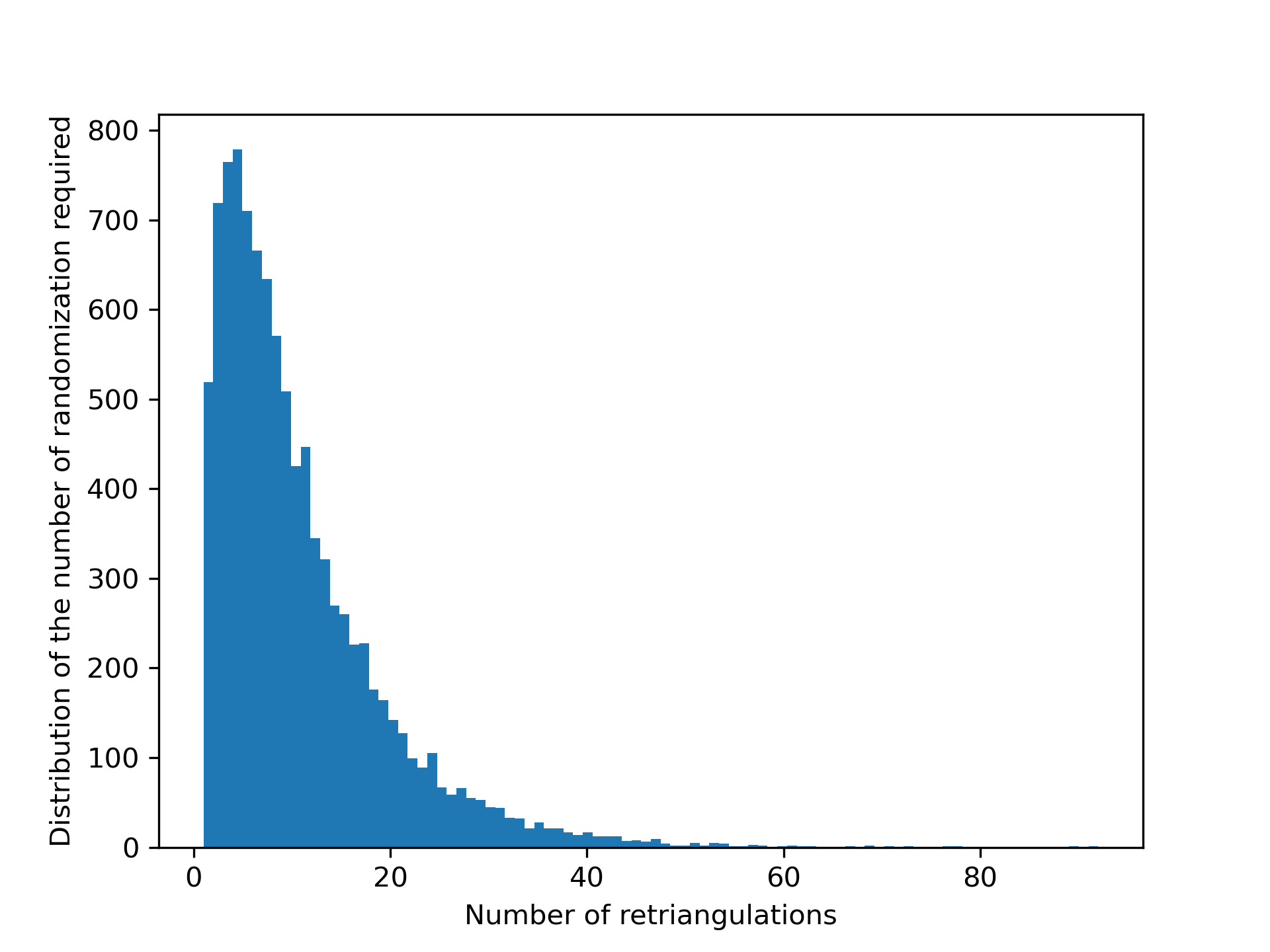}
		\caption{Distribution of the number of randomizations required to find a \chs\ for the complement of the knot ``17nh\_2654001'' of the census, for 10000 tries. The mean is 10.3 randomizations, and the standard deviation is 8.7. The minimum number of re-triangulations is 1, and the maximum 92.}
		\label{fig:17n}
	\end{figure}
	
	Table~\ref{tab:reginacensus} provides data on the search for a \chs\ with \snappy{}, on the $\sim 9.7$ millions prime knots with crossing numbers up to 17. As observed, \snappy{} has a high rate of success in finding a \chs\ after a standard triangulation and simplification of the knot complement. However, this standard construction of a triangulation fails to admit a \chs\ on more than $350$ thousand knots in the census, and the percentage of problematic triangulations needing re-triangulations tends to increase with the number of crossings. Additionally, we observe that some knot complements may require in expectation a high number of random re-triangulations (up to 15.9), and the number of re-triangulations may itself suffer a high variance, as illustrated in Figure~\ref{fig:17n}.
	
	Checking for the existence of a \chs\ requires the resolution of the non-linear gluing equations (with, e.g., Newton optimization method). Reducing the number of re-triangulations is consequently critical for performance of computation in knot theory, most notably for knots on which the state-of-the-art methods implemented in \snappy{} require large numbers of re-triangulations, and even more so when proceeding to very large scale experiments such as the computation of the knot censuses~\cite{burton350} that are of great use to practitioners, where ``difficult'' knots are many.

	\medskip
	
	\noindent
	{\bf Contribution:} This paper introduces a new heuristic based algorithm to improve on the random approach for re-triangulating. The method is inspired by Casson and Rivin's reformulation of the gluing equations~\cite{ThurstonLectures,futer2010angled}: the gluing equations are split into a linear part and a non-linear part, and the resolution reduces to a convex optimization problem on a polytope domain. If the triangulation does not admit a complete structure, the optimization problem will converge on the boundary of the polytope and we exploit this information to introduce new {\em localized moves} to modify combinatorially the triangulation while reusing the partially computed geometry. We introduce necessary background on triangulations and geometry in Section~\ref{sec:background}, and the computation of hyperbolic structures with optimization in Section~\ref{sec:cassonrivin}. We analyze precisely the behavior of the optimization phase on triangulations not admitting a \chs\ in Section~\ref{sec:optim}, and introduce a re-triangulation algorithm in Section~\ref{sec:heuristic} guided by the partially computed geometry of the optimization phase. 
	We illustrate experimentally the interest of the approach in Section~\ref{sec:expe} and propose a hybrid method with \snappy{} in Section~\ref{sec:hybrid}, that outperforms the state-of-the-art.
	
	Note that, in this article we focus on complements of hyperbolic knots. However, the techniques introduced extend to more general hyperbolic 3-manifolds with torus boundaries~\cite{futer2010angled}.

	\section{Background}
	\label{sec:background}
	
	In this article, we focus on knot complements, \ie, non-compact 3-manifolds obtained by removing a closed regular neighborhood of a knot $K$ in the sphere $S^3$.

	\subsection{Generalized and ideal triangulations}
	
	\begin{figure}
		\centering
		\includegraphics[height= 3.2cm]{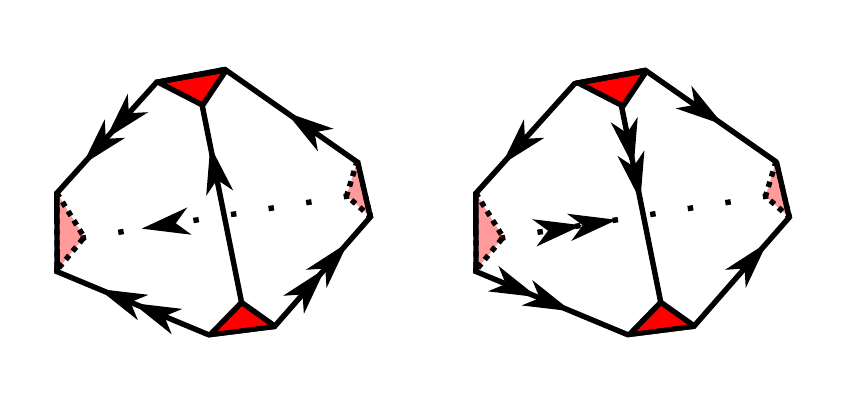}
		\includegraphics[height= 3.2cm]{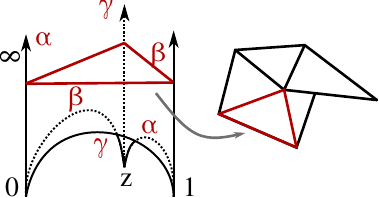}
		\caption{Left: ideal triangulation of the complement of the 8-knot with two tetrahedra. The ideal vertex is truncated, and the red surface gives the torus link of the ideal vertex after gluing of the tetrahedra following the edge identifications. Middle: ideal tetrahedron in the upper half-space model, the dihedral angles are denoted $\alpha$, $\beta$ and $\gamma$, the complex shape parameter $z$ is associated to the edge between $0$ and $\infty$. Right: example of shearing singularity, each triangle corresponds to a ideal tetrahedron seen from above (gray arrow).}
		\label{fig:basetri}
		
	\end{figure}
	
	A \emph{generalized triangulation} $\tri$ is a collection of $n$ abstract tetrahedra whose triangular facets are identified (\eg\ Figure~\ref{fig:basetri} (left)), or \emph{glued}, in pairs. Note that the facets of the same tetrahedron may be glued together, and generalized triangulations are more general than simplicial complexes. The link of a vertex in $\tri$ is the frontier of a closed regular neighborhood, and is itself a closed triangulated surface embedded in the triangulation. If the link of a vertex $v$ is a 2-sphere, we call $v$ an \emph{internal} vertex, otherwise (e.g., if the link if a torus), we call $v$ an ideal vertex. 
	
	A $1$-vertex ideal triangulation is a triangulation with exactly 1 ideal vertex, and no internal vertices. They represent non-compact 3-manifolds, that can be recovered from the 1-vertex ideal triangulation by considering their realization where the vertex has been removed. Every knot complement can be represented by a 1-vertex ideal triangulation, where the link of the ideal vertex gives the frontier of a closed regular neighborhood of the knot. Intuitively, this is a triangulation of the sphere $S^3$ where the knot has been shrunk into a single point, distorting its neighborhood. Such 1-vertex ideal triangulations of a knot complement can be computed in polynomial time~\cite{Jaco0efficient,Hass99UnknotRecNP} from a planar drawing of a knot.

	Any two ideal triangulations of the same 3-manifold can be connected by a sequence of \emph{Pachner moves}~\cite{Pachnermoves}. For 1-vertex ideal triangulations, they consist of the moves 2-3 and 3-2, inverse of each other, pictured in Figure~\ref{fig:pach}.
	
	\begin{figure}
		\centering
		\includegraphics[width = \columnwidth]{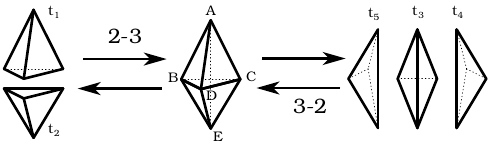}
		\caption{Illustration of the Pachner moves 2-3 and 3-2.}
		\label{fig:pach}
		
	\end{figure}
	
	\subsection{Combinatorial description of hyperbolic geometry}
	\label{sec:hypgeom}

	Certain topological 3-manifolds can be equipped uniformly with a complete hyperbolic metric, which is unique up to isometry. They are called \emph{hyperbolic manifolds}. They include the vast and important family of complements of \emph{hyperbolic knots}.
	
	We use the upper half-space model to represent the hyperbolic space $\HH^3$. This represen\-tation corresponds to $\{(z,r)|z\in\mathbb{C},r\in\mathbb{R}_+^*\}$ with $\partial\overline{\HH^3} = \mathbb{C}\cup\{\infty\}$ (from now on denoted $\partial\HH^3$) consisting of the bottom plane $\mathbb{C} \times \{0\}$, together with the point at infinity, where geodesics are arcs of circles orthogonal to $\partial\HH^3$. This model is \emph{conformal}, \ie, Euclidean angles in the upper half space give the values of the angles in the hyperbolic space. 
	
	An \emph{ideal hyperbolic tetrahedron} is the hyperbolic convex hull of four distinct points of $\partial\HH^3$. These points are the (ideal) vertices of the tetrahedron.
	
	A vertex on $\partial\HH^3$ is a vertex at infinity, thus it is not part of the tetrahedron. An example of ideal tetrahedron is shown in Figure~\ref{fig:basetri} (middle). Up to isometry, the geometric shape of an ideal tetrahedron can be represented by a single complex number:
	
	\begin{definition}[Shape parameter]
		\label{def:shape}
		Given an ideal hyperbolic tetrahedron, there exists an isometry sending three of its vertices to $0$, $1$, and $\infty$, and ensuring that the fourth vertex has positive imaginary part in the complex plane in $\partial\HH^3 = \mathbb{C}\cup\{\infty\}$. The coordinate $z$ of this fourth vertex is the \emph{shape parameter} of the tetrahedron.
	\end{definition}
	\vspace{-0.3cm}
	
	\bigskip
	
	The shape parameter defines and illustrates the shape of an ideal tetrahedron. It depends on which vertices are sent to $0$, $1$, and $\infty$. Other permutations of the vertices give other equivalent shape parameters $z'=\frac{z-1}{z}$ and $z''=\frac{1}{1-z}$ but the underlying tetrahedron is the same. The construction is well defined as isometries of $\HH^3$ are determined by their action on three vertices of $\partial\HH^3$. 
	
	Another way of characterizing the shape of an ideal hyperbolic tetrahedron is to consider its dihedral angles, \ie, the angle formed by two faces meeting on a common edge; see Figure~\ref{fig:basetri} (middle). In an ideal tetrahedron, opposite angles are equal, and the sum of the six dihedral angles is $2\pi$. We denote in the following the dihedral angles of a tetrahedron by a triplet $(\alpha,\beta,\gamma)$ with $(\alpha+\beta+\gamma) = \pi$.

	\section{Angle structures and hyperbolic volume}
	\label{sec:cassonrivin}

	In this section, we introduce notions connected to the computation of complete hyperbolic structures on triangulations, via optimization methods. The approach given in this section is another formulation of Thurston's gluing equations, which are non-linear in the complex shape parameters mentioned above, we refer the reader to~\cite{futer2010angled,ThurstonLectures} for more details.
	
	\subsection{Linear equations and angle structures}

	Let $\tri$ be a 1-vertex ideal triangulation of a knot complement $\M$ with $n$ tetrahedra. 
	Since opposite edges have the same dihedral angles, all possible shapes of the tetrahedra can be represented by a vertex in $\RR^{3n}$. We define an \emph{angle structure}:
	
	\begin{definition}[Angle structures]
		Given an ideal triangulation $\tri$, an \emph{angle structure} is a value assignment to the dihedral angles of the tetrahedra of $\tri$ such that:
		\begin{enumerate}
			\item all the angles are strictly positive;\label{item:pos}
			\item the three dihedral angles $(\alpha, \beta, \gamma)$ of a tetrahedron sum to $\pi$;\label{item:dih}
			\item the dihedral angles around each edge of $\tri$ sum to $2\pi$.\label{item:ed}
		\end{enumerate}
		The set of angle structures on $\tri$ is denoted $\A(\tri)$.
	\end{definition}

	Conditions~\ref{item:pos} and~\ref{item:dih} ensure the angles are in $(0,\pi)$ and that the tetrahedra have the same orientation.
	Condition~\ref{item:ed} is necessary for points on the interior of edges to have a neighborhood isometric to a hyperbolic ball.
	
	Constraints~\ref{item:pos}, \ref{item:dih}, \ref{item:ed} are linear, hence the set $\A(\tri)$ of angle structures of a triangulation is  the relative interior of a polytope in $\RR^{3n}$. It is of dimension $n+|\partial\M|$ where $|\partial\M|$ is the number of cusps of $\M$; in the case of knot complements, there is a single cusp. This polytope satisfies:
	
	\begin{theorem}(Casson; see~\cite{Lackenbydhen})
		Let $\tri$ be an ideal triangulation of $\M$, an orientable 3–manifold with toric cusps. If $\A(\tri)\neq\emptyset$, then $\M$ admits a complete hyperbolic metric.
	\end{theorem}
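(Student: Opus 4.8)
The plan is to establish this via Casson and Rivin's variational characterization~\cite{ThurstonLectures,futer2010angled}: the desired complete metric will be exhibited as the unique maximizer of a volume functional over $\overline{\A(\tri)}$. Assume $\A(\tri)\neq\emptyset$. To a point $p\in\overline{\A(\tri)}$, assigning each tetrahedron $t$ its dihedral angles $(\alpha_t,\beta_t,\gamma_t)$, associate
\[
  \vol(p)\;=\;\sum_{t}\bigl(\Lambda(\alpha_t)+\Lambda(\beta_t)+\Lambda(\gamma_t)\bigr),
\]
where $\Lambda$ is the Lobachevsky function; $\Lambda(\alpha)+\Lambda(\beta)+\Lambda(\gamma)$ is exactly the hyperbolic volume of the ideal tetrahedron with those dihedral angles. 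Since $\Lambda$ is continuous on $[0,\pi]$, $\vol$ is continuous on the compact convex polytope $\overline{\A(\tri)}$, and since $\Lambda$ is strictly concave on $(0,\pi)$ while the defining constraints are linear, $\vol$ is strictly concave on the relative interior $\A(\tri)$. Hence $\vol$ attains a maximum on $\overline{\A(\tri)}$.

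First I would show that this maximum is attained in the relative interior $\A(\tri)$, which is where the hypothesis $\A(\tri)\neq\emptyset$ enters and which is the only genuinely analytic step. Suppose, for contradiction, the maximum were attained at a boundary point $p_1\in\overline{\A(\tri)}\setminus\A(\tri)$; fix $p_0\in\A(\tri)$ and set $p_s=(1-s)p_1+sp_0\in\A(\tri)$ for $s\in(0,1]$. At $p_1$ at least one angle coordinate vanishes, and since $\Lambda'(\theta)=-\log|2\sin\theta|\to+\infty$ as $\theta\to0^+$, the one-sided derivative of $s\mapsto\vol(p_s)$ at $s=0^+$ equals $+\infty$. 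Thus $\vol(p_s)>\vol(p_1)$ for small $s>0$, contradicting maximality. So the maximum is realised at an interior critical point $p^\star$, unique by strict concavity.

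Next I would convert the criticality of $p^\star$ into Thurston's gluing equations. At $p^\star$ the differential of $\vol$ vanishes on the tangent space of $\A(\tri)$, i.e.\ on the linear space cut out by the per-tetrahedron relations $\alpha_t+\beta_t+\gamma_t=\pi$ and the per-edge relations of Condition~\ref{item:ed}. Writing this out with Lagrange multipliers and using $\Lambda'(\theta)=-\log|2\sin\theta|$ together with the expressions for the modulus and argument of the shape parameter $z_t$ in terms of the dihedral angles, the Euler--Lagrange system becomes exactly the assertion that around every edge of $\tri$ the product of the associated complex shape parameters equals $1$: Condition~\ref{item:ed} supplies the argument of this product (the dihedral angles sum to $2\pi$), and criticality supplies its modulus ($\sum\log|z_t|=0$, i.e.\ no shearing singularity along the edge, cf.\ Figure~\ref{fig:basetri}, right). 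These are Thurston's edge equations, and all $z_t$ have positive imaginary part, so the tetrahedra are coherently oriented. Finally, because $p^\star\in\A(\tri)$ the induced triangulation of each toric cusp cross-section carries a similarity structure all of whose cone angles are $2\pi$, and the vanishing of the shearing terms forces its holonomy to be purely translational, giving a genuine Euclidean torus; the structure is therefore complete. Thurston's developing-map construction then yields a complete hyperbolic metric on $\M$, unique by Mostow rigidity~\cite{Mostowrigid}.

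The step I expect to be the main obstacle is the Euler--Lagrange computation: one must carefully track how each angle coordinate occurs in the edge relations (the two opposite edges carrying a given dihedral angle of a tetrahedron may be identified to a single edge of $\tri$ or to two distinct ones), and verify that the resulting linear system is \emph{equal} to Thurston's edge system, not merely implied by it --- this is Rivin's computation and it needs a deliberate choice of local coordinates around each edge. A secondary point requiring care is the completeness claim: one must check that, once the shearing obstructions vanish, the flat cone torus arising from the angle structure genuinely has translational holonomy and hence gives a complete cusp.
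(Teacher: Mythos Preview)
The paper does not prove this statement; it is quoted as a known result of Casson with a reference to Lackenby, so there is no in-paper proof to compare against.

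That said, your proposed argument has a genuine gap, and it is exactly the gap the rest of the paper is built around. Your ``only genuinely analytic step'' claims that the maximum of $\vol$ over $\overline{\A(\tri)}$ must lie in the interior, because $\Lambda'(\theta)\to+\infty$ as $\theta\to0^+$ forces the one-sided derivative along any segment into the interior to be $+\infty$. This does not follow. At a boundary point where a tetrahedron has angles $(0,0,\pi)$, \emph{three} coordinates are on the boundary simultaneously, and the constraint $\alpha+\beta+\gamma=\pi$ forces the inward direction on that tetrahedron to be $(w_1,w_2,-(w_1+w_2))$ with $w_1,w_2>0$. Since $\Lambda'(\theta)\to+\infty$ also as $\theta\to\pi^-$, the three singular contributions combine to an indeterminate form; the computation in \reflem{flat} shows that the limit is
\[
-w_1\log w_1 - w_2\log w_2 + (w_1+w_2)\log(w_1+w_2),
\]
which is finite. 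Hence the maximum of $\vol$ can, and on many triangulations does, sit on $\partial\overline{\A(\tri)}$ with flat tetrahedra present; Sections~\ref{sec:optim}--\ref{sec:heuristic} are devoted precisely to this case. Your argument only rules out boundary maxima of type $(0,a,b)$ with $a,b\in(0,\pi)$, which is the content of \reflem{flat}, not the theorem you are after.

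The actual proof of Casson's theorem does not go through the volume maximiser. From the mere existence of an angle structure one extracts a combinatorial nonpositive-curvature condition and uses a Gauss--Bonnet argument on normal surfaces to show that $\M$ is irreducible and atoroidal; hyperbolicity then follows from Thurston's hyperbolisation theorem for Haken manifolds. In particular the complete metric need not be realised on the given triangulation $\tri$, which is why the theorem only asserts that $\M$ is hyperbolic, not that $\tri$ carries a \chs.
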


	Furthermore,~\cite{futer2010angled} describes precisely a generating family for the tangent space of $\A(\tri)$ that can be computed in polynomial time.
	
	\subsection{Maximizing the hyperbolic volume} The hyperbolic volume of an ideal hyperbolic tetrahedron with dihedral angles $(\alpha,\beta,\gamma)\in(0,\pi)^3$ is given by the function $\vol$:
	\[ \vol(\alpha,\beta,\gamma) = \text{\CYRLx}(\alpha)+\text{\CYRLx}(\beta)+\text{\CYRLx}(\gamma)\]
	where \CYRLx\ is the Lobachevsky function $\text{\CYRLx}(x)=-\int_{0}^{x} \log|2\sin t| \, \mathrm{d}t$. The volume functional can be extended to the whole polytope $\A(\tri)$ by summing the volumes of the hyperbolic tetrahedra. The following result is due independently to Casson and Rivin.
	
	\begin{theorem}[Casson, Rivin\cite{Rivin1994EuclideanSO}]
		Let $\tri$ be an ideal triangulation with $n$ tetrahedra of $\M$, an orientable 3–manifold with boundary consisting of tori. Then a point $p\in \A(\tri)\subset\RR^{3n}$ corresponds to a complete hyperbolic metric on the interior of $\M$ if and only if $p$ is a critical point of the function $\vol$. 
		
		Additionally, the volume functional is concave on $\A(\tri)$ and the maximum can be computed via convex optimization methods.
	\end{theorem}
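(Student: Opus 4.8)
\emph{Overview.} The statement splits into two essentially independent parts: the concavity of $\vol$ on $\A(\tri)$ (from which computability by convex optimization is immediate), which is elementary, and the identification of the critical points of $\vol$ with the complete hyperbolic structures, which carries all the geometry. I would treat them in that order.

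\emph{Concavity.} The Lobachevsky function is real-analytic on $(0,\pi)$ with $\CYRLx'(x)=-\log|2\sin x|$ and $\CYRLx''(x)=-\cot x$, and it extends continuously to $[0,\pi]$ with $\CYRLx(0)=\CYRLx(\pi)=0$; hence $\vol$ extends continuously to the compact polytope $\overline{\A(\tri)}$ and attains a maximum there. For a single tetrahedron I would substitute $\gamma=\pi-\alpha-\beta$ and compute the Hessian of $(\alpha,\beta)\mapsto\CYRLx(\alpha)+\CYRLx(\beta)+\CYRLx(\pi-\alpha-\beta)$: it is the symmetric $2\times 2$ matrix with diagonal entries $-(\cot\alpha+\cot\gamma)$ and $-(\cot\beta+\cot\gamma)$ and off-diagonal entry $-\cot\gamma$. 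Its determinant equals $\cot\alpha\cot\beta+\cot\beta\cot\gamma+\cot\gamma\cot\alpha$, which is $1$ by the classical identity valid whenever $\alpha+\beta+\gamma=\pi$, and its top-left entry equals $-\sin\beta/(\sin\alpha\sin\gamma)<0$; so the Hessian is negative definite and the single-tetrahedron volume is strictly concave on the plane of condition~(2). Summing over the $n$ tetrahedra yields a function that is strictly concave on the product of those planes, and restricting further to the affine slice cut out by condition~(3), that is, to the convex set $\A(\tri)$, keeps it strictly concave. Maximizing a strictly concave function over the compact convex set $\overline{\A(\tri)}$ is thus a convex optimization problem solvable by standard methods, and if the maximizer lies in the relative interior $\A(\tri)$ it is the unique critical point of $\vol$ there.

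\emph{Critical points are the complete structures.} Here I would pass to Thurston's gluing equations in shape-parameter form. An angle structure already reconstructs a positively oriented shape parameter $z_i$ for each tetrahedron, and conditions~(2)--(3) are exactly the imaginary (dihedral-angle) parts of the edge-gluing equations; what is missing for a genuine complete structure is that the product of the shape parameters around each edge equal $1$ (equivalently, since the argument is already $2\pi$, that the sum of their log-moduli around each edge vanish) together with the cusp completeness equations (trivial scaling holonomy around the two generators of the cusp torus). I would extract exactly these from the Lagrange condition for a critical point of $\vol$ on $\A(\tri)$: introducing a multiplier $\mu_i$ for each constraint ``the angles of tetrahedron $i$ sum to $\pi$'' and $\lambda_e$ for each constraint ``the angles around edge $e$ sum to $2\pi$'', the condition reads $\CYRLx'(\theta)=\mu_i+\lambda_{e'}+\lambda_{e''}$ for every dihedral angle $\theta$ of every tetrahedron $i$, where $e'$ and $e''$ are the edges of $\tri$ to which the relevant pair of opposite tetrahedron-edges is glued. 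The computational heart is the identity expressing the log-modulus of a shape parameter as a difference of values of $\CYRLx'$ at the tetrahedron's angles (by the law of sines in the Euclidean triangle with vertices $0$, $1$, and $z$ one gets $\log|z_i|=\CYRLx'(\gamma_i)-\CYRLx'(\beta_i)$ in a suitable labelling of the three angles): substituting the Lagrange relation, the $\mu_i$ cancel and each log-modulus becomes a difference of sums of $\lambda$'s attached to adjacent edges. In other words, the critical-point condition says precisely that the whole family of log-moduli is a discrete coboundary, namely the difference of a function built from the $\lambda_e$ along the edges of the triangulated cusp torus. A coboundary sums to zero over every cycle: taken over the link of an edge of $\tri$ this gives the vanishing of the log-moduli around that edge (the ``real parts'' of the edge equations), and taken over the two generators of the cusp torus it forces trivial scaling holonomy, that is, completeness. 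Combined with the automatically valid imaginary parts, this is exactly the statement that the $z_i$ solve Thurston's gluing and completeness equations, so $p$ corresponds to the complete hyperbolic metric; by the concavity already established, such a $p$ is then the unique maximizer of $\vol$.

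\emph{Converse and the main obstacle.} The converse implication runs the same dictionary backwards: from a complete hyperbolic structure compatible with $\tri$ one reads off a point of $\A(\tri)$, and developing the structure with a choice of reference horospheres makes the induced structure on the cusp torus genuinely Euclidean, so its edges acquire consistent lengths whose logarithms furnish multipliers $\lambda_e$ verifying the Lagrange relation, hence criticality. I expect the whole difficulty to lie in the critical-points step, and within it in two places: getting the shape-parameter/$\CYRLx'$ dictionary right, with the correct matching of opposite edge-pairs to edges of $\tri$ and a coherent choice of logarithm branch throughout the triangulation; and --- the truly delicate point, the Casson--Rivin phenomenon --- verifying that the \emph{same} multipliers $\lambda_e$ that encode the edge equations also force the cusp completeness equations, for which the cleanest route is the reinterpretation of the $\lambda_e$ as logarithmic edge lengths of the induced similarity structure on the triangulated cusp torus, with the critical-point condition making that structure close up with trivial scaling.
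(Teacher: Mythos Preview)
The paper does not prove this theorem: it is quoted as a known result of Casson and Rivin (with a reference to Rivin's paper and to the survey~\cite{futer2010angled}) and used as black-box input to the algorithmic pipeline, so there is no ``paper's own proof'' to compare against.

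That said, your sketch is correct and is essentially the standard argument as presented in the Futer--Gu\'eritaud survey cited by the paper. The concavity computation via the $2\times 2$ Hessian and the identity $\cot\alpha\cot\beta+\cot\beta\cot\gamma+\cot\gamma\cot\alpha=1$ is exactly right. For the critical-point characterization, your Lagrange-multiplier set-up, the law-of-sines identity $\log|z|=\CYRLx'(\gamma)-\CYRLx'(\beta)$ (up to labelling), and the reinterpretation of the multipliers $\lambda_e$ as log edge-lengths on the cusp torus are the right ingredients; the coboundary reading that simultaneously kills the real parts of the edge equations and the scaling holonomy on the cusp is precisely the Casson--Rivin mechanism. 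The one place to be careful when you write this out in full is the bookkeeping you flag yourself: the pair of opposite tetrahedron-edges carrying a given angle may glue to the \emph{same} edge $e$ of $\tri$, in which case your Lagrange relation reads $\CYRLx'(\theta)=\mu_i+2\lambda_e$ rather than $\mu_i+\lambda_{e'}+\lambda_{e''}$; this does not affect the argument but should be stated. Otherwise nothing is missing.
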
 
	
	We can finally define \chs\ that represent combinatorially complete hyperbolic metrics:
	
	\begin{definition}[Complete Hyperbolic Structure]
		A Complete Hyperbolic Structure (\chs) is a triangulation $\tri$ equipped with an angle structure corresponding to the maximum of $\vol$ over $\A(\tri)$.
	\end{definition}

	\begin{remark}
		Angle structures are not \chs\ as they do not prevent shearing singularities, see Figure~\ref{fig:basetri} (right), and consequently may represent \emph{non-complete} hyperbolic metrics.
	\end{remark}

\section{Behavior of the optimization}
\label{sec:optim}
Exploiting the formalism of the previous section, one can design an algorithm~\cite{futer2010angled} to find and compute a \chs\ by first finding a point in the polytope $\A(\tri)$, then computing a basis of the tangent space of $\A(\tri)$, and then maximizing the (concave) hyperbolic volume functional on this subspace. 
If $\A(\tri) \neq \emptyset$ and the procedure finds the point maximizing the volume in the inside of the polytope, then the triangulation admits a complete hyperbolic structure represented by this point. Finally, if $\A(\tri) \neq \emptyset$ but the maximum of the volume functional is on the boundary, then one needs to re-triangulate the manifold in order to search for a triangulation admitting a \chs.

In this section, we study the outcome of the hyperbolic volume maximization on the space of angle structures. The result is given by the following lemma, which also appeared independently in~\cite{Nimershiem21}:

\begin{lemma}
	Let $\tri$ be an ideal triangulation of $\M$, a non-compact orientable 3–manifold with toric cusps. Let $p$ be the point maximizing $\vol$ over $\overline{\A(\tri)}$ (the topological closure of $\A(\tri)$), then at $p$, if a tetrahedron has an angle equal to $0$, then all other angles of the tetrahedron are in $\{0,\pi\}$.
	\label{lem:flat}
\end{lemma}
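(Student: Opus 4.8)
Here is how I would approach the proof.

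\medskip

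The plan is to show that if $p$ maximizes $\vol$ over $\overline{\A(\tri)}$ and some tetrahedron is \emph{partially flat} at $p$ --- one dihedral angle equal to $0$, the other two strictly between $0$ and $\pi$ --- then perturbing $p$ toward a genuine angle structure strictly increases the volume, contradicting maximality; so the only flat tetrahedra allowed are the \emph{totally flat} ones with angles $(0,0,\pi)$, which is exactly the statement.

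First, since a maximizer $p$ exists, $\overline{\A(\tri)}$ and hence $\A(\tri)$ is nonempty, so fix a genuine angle structure $q\in\A(\tri)$, all of whose dihedral angles lie in $(0,\pi)$. At $p$ every angle lies in $[0,\pi]$ (nonnegativity together with $\alpha+\beta+\gamma=\pi$), so each tetrahedron of $\tri$ has, at $p$, one of exactly three shapes: type (A), all three angles in $(0,\pi)$; type (B), exactly one angle $0$ and the other two in $(0,\pi)$; or type (C), angles $(0,0,\pi)$ up to permutation --- note in particular that an angle equal to $\pi$ forces type (C), since the remaining two angles are nonnegative and sum to $0$. The lemma is precisely the assertion that no tetrahedron has shape (B), so assume for contradiction that $t_0$ does.

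Next, consider the segment $p_s:=(1-s)p+sq$ for $s\in(0,1]$, which lies in $\overline{\A(\tri)}$ by convexity, and decompose $\vol(p_s)-\vol(p)=\sum_t f_t(s)$ tetrahedron by tetrahedron. The analytic facts needed about the Lobachevsky function are $\text{\CYRLx}(\pi)=0$, the reflection identity $\text{\CYRLx}(\pi-x)=-\text{\CYRLx}(x)$, the expansion $\text{\CYRLx}(\epsilon)=\epsilon\bigl(1-\log(2\epsilon)\bigr)+O(\epsilon^3)$ as $\epsilon\to 0^+$, and smoothness of $\text{\CYRLx}$ on $(0,\pi)$. With these one computes, as $s\to 0^+$: for a type-(A) tetrahedron, $f_t(s)/s$ tends to a finite limit (a directional derivative of $\vol$); for a type-(C) tetrahedron, with its two vanishing angles appearing at $p_s$ as $sa,sb$ (where $a,b>0$ are their values at $q$) and its large angle as $\pi-s(a+b)$, writing $\text{\CYRLx}(\pi-s(a+b))=-\text{\CYRLx}(s(a+b))$ makes the coefficients of $\log s$ and of $s$ cancel, leaving $f_t(s)/s\to (a+b)\log(a+b)-a\log a-b\log b\ge 0$; and for a type-(B) tetrahedron, its single vanishing angle contributes $\text{\CYRLx}(sq')=-sq'\log s+O(s)$ with $q'>0$ while the other two angles contribute $O(s)$, so $f_t(s)/s\to+\infty$. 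Hence every $f_t(s)/s$ with $t\ne t_0$ is bounded below near $s=0$, say $\sum_{t\ne t_0}f_t(s)/s\ge -C$, whereas $f_{t_0}(s)/s\to+\infty$; therefore $\bigl(\vol(p_s)-\vol(p)\bigr)/s\to+\infty$, giving $\vol(p_s)>\vol(p)$ for all sufficiently small $s>0$ and contradicting the maximality of $p$. So no tetrahedron has shape (B), which proves the lemma.

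The step I expect to be the real obstacle --- and the reason one cannot simply conclude that every angle at $p$ lies in $(0,\pi)$ --- is the bookkeeping of infinities. An angle equal to $\pi$ also sends $\text{\CYRLx}'$ to $+\infty$, so it contributes $-\infty$ to the directional derivative along $p_s$, and a priori this could cancel the $+\infty$ produced by a flat tetrahedron. The argument survives only because a $\pi$-angle can occur exclusively inside a type-(C) tetrahedron, where it sits alongside two vanishing angles whose $\log s$ contributions cancel it exactly --- the exactness being forced by $\alpha+\beta+\gamma=\pi$ and $\text{\CYRLx}(\pi-x)=-\text{\CYRLx}(x)$. Making this cancellation precise (equivalently, identifying exactly which tetrahedra can generate a $-\infty$ and checking that none is left unmatched by a compensating $+\infty$) is the crux; the rest is convexity of $\overline{\A(\tri)}$ and the elementary asymptotics of $\text{\CYRLx}$ quoted above.
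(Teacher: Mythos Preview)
Your proof is correct and follows essentially the same route as the paper: both perturb $p$ along a segment toward the interior of $\A(\tri)$ and show that a tetrahedron with a single zero angle forces the directional derivative of $\vol$ to $+\infty$, while $(0,0,\pi)$ tetrahedra contribute only finite terms thanks to the cancellation you spell out. The paper phrases this via $\text{\CYRLx}'(x)=-\log|2\sin x|$ and the expansion $\log\sin x=\log x+O(x^2)$ rather than the asymptotics of $\text{\CYRLx}$ itself, but the mechanism and the case analysis are the same --- your write-up is in fact a bit more careful about why a $\pi$-angle cannot produce an unmatched $-\infty$.
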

In other words, either the maximization succeeds in $\A(\tri)$, or there is at least one tetrahedron with angles $(0,0,\pi)$ in $p$. Such tetrahedron is called \emph{flat}. It is not possible to have a tetrahedron with angles $(0,a,b)$, $(a,b)\in(0,\pi)$ in $p$.

\begin{proof}
	
	Let $\vec{p}\in\A(\tri)$ be an angle structure over $\tri$. Let $\vec{w}\in \mathbb{R}^{3n}$ be a vector tangent to $\A(\tri)$, then:

	\[
	\frac{\partial \vol(\vec{p})}{\partial\vec{w}} = \sum_{i=1}^{3n} -w_i \log\sin(p_i).
	\]
	
	Note that because $\vec{w}$ is tangent to $\A(\tri)$, its restriction to a single tetrahedron is three elements whose sum is equal to zero. Let us assume the maximum of the volume is reached on some point $\hat{p}$. Let us assume a tetrahedron $t$ has angles $(0,a,b)$ at $\hat{p}$, $0<a\leq b$, let $\hat{w}$ be a vector tangent to $\A(\tri)$ pointing towards the interior of the polytope with respect to $\hat{p}$ (and thus $w_1>0$) and $\hat{w}_t = (w_1,w_2,w_3)$ be its restriction to $t$. Then: 
	
	\[
	\lim_{\epsilon\rightarrow 0^+} \frac{\partial \vol(t+\epsilon\hat{w}_t)}{\partial\hat{w}_t}= -w_2 \log\sin(a) - w_3 \log\sin(b) + \lim_{\epsilon\rightarrow 0^+}-w_1 \log\sin(\epsilon w_1) = +\infty .
	\]
	
	Now, with the same notations, let us assume $t$ has angles $(0,0,\pi)$ at $\hat{p}$, then using $\log(\sin(x))=\log(x) + O_{x\rightarrow 0}(x^2)$:

	\begin{align*}
		\lim_{\epsilon\rightarrow 0^+} & \frac{\partial \vol(t+\epsilon\hat{w}_t)}{\partial\hat{w}_t}\\
		& =  -w_1\log\sin(\epsilon w_1)-w_2\log\sin(\epsilon w_2)+(w_1+w_2)\log\sin(\pi - \epsilon (w_1+w_2))\\
		& =-w_1\log(\epsilon w_1)-w_2\log(\epsilon w_2)+(w_1+w_2)\log(\epsilon (w_1+w_2)) + O_{\epsilon\rightarrow 0}(\epsilon^2) \\
		& =-w_1\log(w_1)-w_2\log(w_2)+(w_1+w_2)\log(w_1+w_2) + O_{\epsilon\rightarrow 0}(\epsilon^2)\\
		& >-\infty
	\end{align*}

	Since for all tetrahedra with non zero angles or angles of the form $(0,0,\pi)$, the derivative of the volume is bounded, the existence of a tetrahedra with exactly one angle set to zero contradicts the maximality of the volume at that point. Indeed, when getting away from this point, there exists a small neighborhood where the derivative of the volume can be arbitrarily high.

\end{proof}

We introduce in the next section an algorithm that performs localized combinatorial modifications on a triangulation equipped with an angle structure, in order to get rid of flat tetrahedra.

\section{Localized combinatorial modifications of triangulations}
\label{sec:heuristic}
According to Lemma~\ref{lem:flat}, the volume maximization either leads to a solution to the gluing equations, or to flat tetrahedra. In this section, we discuss a method to get rid of flat tetrahedra by combinatorial modifications of the triangulation, while attempting to maintain the value of the volume functional. This would lead to a triangulation admitting an angle structure with larger volume than the previous one, allowing to resume the maximization. In order to maintain the value of the volume functional, we introduce {\em geometric Pachner moves}.

\subsection{Geometric Pachner moves}

We perform Pachner moves on the triangulation that preserves the partial geometric data computed. More precisely, we define:

\begin{definition}[Geometric Pachner move]
	A \emph{geometric Pachner move} in a triangulation $\tri$ with angle structure is a Pachner move in $\tri$ such that the resulting triangulation admits an angle structure  with identical dihedral angles for the tetrahedra not involved in the move.
\end{definition}

To check if a geometric Pachner move can be done, it must be valid combinatorially, and it should be possible to assign dihedral angles to the new tetrahedra without altering the rest of the angle structure. The combinatorial conditions are simple~\cite{Pachnermoves}: for the 2-3 moves (resp. 3-2 move) in Figure~\ref{fig:pachlem}, the tetrahedra sharing the common triangle $BDC$ (resp. common edge $AE$) must be distinct. The conditions on the angle structures are given by the following lemma:

\begin{lemma}
	\label{lem:geom}
	Given a triangulation with angle structure $\tri$, if the combinatorial conditions given above are satisfied:
	\begin{itemize}
		\item a 3-2 move is always geometric;
		\item a 2-3 move is geometric if and only if the sum of the two dihedral angles around each edge of the common face is smaller than $\pi$.
	\end{itemize}
	Furthermore the new angles can be computed using formulas.
\end{lemma}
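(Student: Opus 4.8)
The plan is to reduce the lemma to a purely local linear computation inside the bipyramid $P$ swept out by the two tetrahedra involved (glued along the common face $BDC$ for the 2-3 move; for the 3-2 move $P$ is the union of the three tetrahedra around the common edge $AE$), since the triangulation is unchanged outside $P$. First I would fix the notation of Figure~\ref{fig:pachlem}: apices $A,E$, equator vertices $B,C,D$, old tetrahedra $T_1=ABCD$ and $T_2=EBCD$ sharing the face $F=BDC$, and new tetrahedra $S_1=AEBC$, $S_2=AECD$, $S_3=AEDB$ around the new edge $AE$; write the dihedral angles of $T_1$ as $(\alpha_1,\beta_1,\gamma_1)$ on the opposite-edge pairs $\{AB,CD\},\{AC,BD\},\{AD,BC\}$ and those of $T_2$ as $(\alpha_2,\beta_2,\gamma_2)$ on $\{EB,CD\},\{EC,BD\},\{ED,BC\}$. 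For the 2-3 move, ``the new triangulation carries an angle structure coinciding with the old one off $P$'' unpacks into: each $S_i$ has three positive dihedral angles summing to $\pi$; the angles around $AE$ sum to $2\pi$; and for every other edge of $P$ the total of the new dihedral angles equals the total of the old ones. The key point is that each $S_i$ contains $AE$ together with exactly one edge of $F$ (and these are opposite edges of $S_i$, hence carry equal angles), so the last condition at the three edges of $F$ forces the $AE$-angle of $S_1$ to equal $\gamma_1+\gamma_2$, that of $S_2$ to equal $\alpha_1+\alpha_2$, and that of $S_3$ to equal $\beta_1+\beta_2$; the $AE$-summation condition is then automatic, since these add up to $(\alpha_1+\beta_1+\gamma_1)+(\alpha_2+\beta_2+\gamma_2)=2\pi$. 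Imposing the edge totals at the six remaining edges $AB,AC,AD,EB,EC,ED$ gives six more linear equations in the six still-unknown new angles, and I expect this to leave a one-parameter affine family of candidate assignments, each coordinate an explicit affine function of a single free parameter --- consistent with the dimension of $\A$ rising by one under a 2-3 move.

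The remaining work is the sign analysis, and this is the step I expect to be the main obstacle. Necessity of the three inequalities $\gamma_1+\gamma_2<\pi$, $\alpha_1+\alpha_2<\pi$, $\beta_1+\beta_2<\pi$ is immediate, because the $AE$-angle of $S_1$ is forced to be $\gamma_1+\gamma_2$ and must lie in $(0,\pi)$, and similarly for $S_2,S_3$. For sufficiency I would write the positivity of the six still-unknown new angles as a list of upper and lower bounds on the free parameter and check that the resulting interval is nonempty: each required comparison of the form ``lower bound $<$ upper bound'' should collapse, using $\alpha_j+\beta_j+\gamma_j=\pi$ and the strict positivity of the old angles, either to positivity of an old angle or to one of the three displayed inequalities. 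Hence the feasible interval is nonempty precisely when the stated criterion holds, any value in it gives the desired angle structure, and the explicit solution supplies the formulas promised in the statement.

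For the 3-2 move I would run the same relations in reverse: starting from a valid angle structure on $S_1,S_2,S_3$ (so their $AE$-angles already sum to $2\pi$ and lie in $(0,\pi)$), the two resulting tetrahedra $T_1,T_2$ are forced, and each of their six dihedral angles is recovered as a sum of two of the (strictly positive) angles of $S_1,S_2,S_3$ --- for instance $\alpha_1$ as the sum of the $S_1$- and $S_3$-angles at $AB$; hence all recovered angles are positive, the per-tetrahedron sums equal $\pi$, and the per-edge totals are unchanged, using only that the $AE$-angles of $S_1,S_2,S_3$ sum to $2\pi$. So a 3-2 move is always geometric. Two loose ends remain to be tidied up: keeping the opposite-edge identifications inside the $S_i$ straight (easy but error-prone bookkeeping), and noting that in a generalized triangulation some pieces of $P$ may fail to be embedded (two of the new tetrahedra identified, or $AE$ parallel to an existing edge), which the stated combinatorial hypotheses rule out and which in any case leaves the equations above linear. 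Conceptually the criterion is the expected convexity condition: gluing $T_1$ and $T_2$ into a geodesic ideal bipyramid, the diagonal geodesic $AE$ runs through its interior exactly when the dihedral angles along $\partial F$ are all smaller than $\pi$.
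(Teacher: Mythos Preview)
Your proposal is correct and follows the paper's overall strategy: reduce to the local linear system on the bipyramid, observe that the $AE$-angles in the three new tetrahedra are forced to be the sums $\gamma_1+\gamma_2$, $\alpha_1+\alpha_2$, $\beta_1+\beta_2$ (giving necessity immediately), exhibit a one-parameter family for the remaining six angles, and for the 3-2 direction read off each new angle as a sum of two old positive ones.

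The one substantive difference is how you handle \emph{sufficiency} in the 2-3 case. The paper (its auxiliary Lemma~\ref{lem:aux23geom}) argues by contradiction: set one of the six free angles to $0$, chase the signs, and obtain $\alpha_1+\beta_1+\gamma_1<\alpha_2+\beta_2+\gamma_2$, which is absurd. You instead propose to list the three lower and three upper bounds on the single free parameter and verify all nine ``lower $<$ upper'' comparisons directly. This does work: each of the nine inequalities collapses, via $\alpha_j+\beta_j+\gamma_j=\pi$, either to positivity of some old angle or to one of the three hypotheses $\alpha_1+\alpha_2<\pi$, $\beta_1+\beta_2<\pi$, $\gamma_1+\gamma_2<\pi$ (the only non-obvious one, $\pi-\gamma_1-\gamma_2-\alpha_2<\alpha_1$, is exactly $\beta_1+\beta_2<\pi$). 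Your route is more mechanical but entirely self-contained; the paper's is shorter once the right variable to zero out is chosen. A second minor difference: for the ``formulas'' part in the 2-3 case the paper passes to complex edge parameters and multiplies them, which selects the canonical point in your one-parameter family with no shearing around $AE$; your affine formulas in the free parameter are equally valid but do not single out that distinguished choice.
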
  

\begin{figure}
	\centering
	\includegraphics[width = \columnwidth]{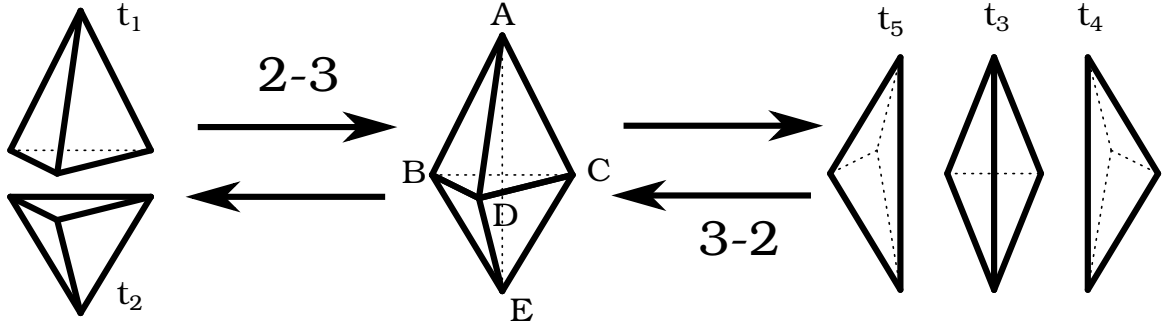}
	\caption{Pachner moves 2-3 and 3-2 with vertices corresponding to the notations of the lemmas of this section.}
	\label{fig:pachlem}
	
\end{figure}

The idea to prove the lemma is to study the system of equations given by the assumption that the angle structure is not altered outside the move, and we break the lemma into three: Lemma~\ref{lem:32geom} concerns the 3-2 move, Lemma~\ref{lem:23geom} concerns the 2-3, and Lemma~\ref{lem:anglevalues} concerns the values of the angles. 

With the notations of Figure~\ref{fig:pachlem}, let $t_1=ABCD$ and $t_2=BCDE$ be two distinct tetrahedra glued along their common face, and $t_3 = ABCE$, $t_4 = ACDE$ and $t_5 = ADBE$ three distinct tetrahedra glued around their common edge. Assuming there is an underlying angle structure, let us denote $t(e)$ the value of the dihedral angle around the edge $e$ in the tetrahedron $t$. Passing from $(t_1,t_2)$ to $(t_3,t_4,t_5)$, \ie\ doing a 3-2 move, or the opposite without altering the rest of the angle structure leads to the following equations:

\begin{equation}
	t_1(BC)+t_2(BC) = t_3(BC),\ t_1(CD)+t_2(CD) = t_4(CD),\ t_1(DB)+t_2(DB) = t_5(DB)
	\label{eqn:pi}
\end{equation}

\begin{equation}
	t_1(AB) = t_3(AB)+t_5(AB),\ t_1(AC) = t_3(AC)+t_4(AC),\ t_1(AD) = t_4(AD)+t_5(AD)
	\label{eqn:t1}
\end{equation}

\begin{equation}
	t_2(EB) = t_3(EB)+t_5(EB),\ t_2(EC) = t_3(EC)+t_4(EC),\ t_2(ED) = t_4(ED)+t_5(ED)
	\label{eqn:t2}
\end{equation}

The constants, the angles that are fixed in the angle structure, and the unknowns, the angles we want to compute, depends on which move is studied.
By equality of the dihedral angles, $t(UV) = t(XY)$ for $UVXY$ vertices of $t$. This equality is enforced by the framework of the angle structures: for the constraints system, $t(UV)$ and $t(XY)$ both represent the same variables.

\subsubsection{The 3-2 move}

\begin{lemma}
	Let $\tri$ be a triangulation with angle structure, let $t_3 = ABCE$, $t_4 = ACDE$ and $t_5 = ADBE$ be three distinct tetrahedra glued around an edge. 
	A 3-2 geometric Pachner move can always be performed at $AE$.
	\label{lem:32geom}
\end{lemma}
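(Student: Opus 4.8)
The plan is to produce an explicit angle structure on the triangulation $\tri'$ obtained by deleting $t_3,t_4,t_5$ and inserting $t_1=ABCD$ and $t_2=BCDE$ glued along the face $BCD$, with all other tetrahedra and their angles left untouched; this is exactly what it means for the $3$--$2$ move to be geometric. Since $t_3,t_4,t_5$ are distinct and meet around $AE$, the move is combinatorially valid, so only the angle structure has to be supplied. I would take equations~\eqref{eqn:t1} and~\eqref{eqn:t2} as the \emph{definitions} of the new dihedral angles: $t_1(AB):=t_3(AB)+t_5(AB)$, $t_1(AC):=t_3(AC)+t_4(AC)$, $t_1(AD):=t_4(AD)+t_5(AD)$, and symmetrically $t_2(EB):=t_3(EB)+t_5(EB)$, $t_2(EC):=t_3(EC)+t_4(EC)$, $t_2(ED):=t_4(ED)+t_5(ED)$; the remaining angles of $t_1$ and $t_2$ are then forced by opposite--edge equality. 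With this choice the edge conditions at $AB,AC,AD,EB,EC,ED$ hold by construction, and the edge conditions at every edge not meeting $t_1\cup t_2$ are inherited verbatim from $\tri$. It then remains to verify (i) positivity, (ii) that the three angles of $t_1$, and those of $t_2$, sum to $\pi$, and (iii) the edge conditions at $BC$, $CD$, $DB$, i.e. equations~\eqref{eqn:pi}.

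Positivity (i) is immediate, each new angle being a sum of two strictly positive angles of the structure on $\tri$. For (ii), write out $t_1(AB)+t_1(AC)+t_1(AD)$ and regroup the six summands by which tetrahedron they come from: from $t_3$ we get $t_3(AB)+t_3(AC)$, the dihedral angles of $t_3$ at the two edges through the vertex $A$ other than $AE$. In any tetrahedron the three edges at a common vertex are pairwise non-opposite, hence carry the three distinct angle values, which sum to $\pi$; so $t_3(AB)+t_3(AC)=\pi-t_3(AE)$, and likewise the $t_4$-- and $t_5$--contributions equal $\pi-t_4(AE)$ and $\pi-t_5(AE)$. Summing, $t_1(AB)+t_1(AC)+t_1(AD)=3\pi-\big(t_3(AE)+t_4(AE)+t_5(AE)\big)$, and since $t_3,t_4,t_5$ are exactly the tetrahedra around $AE$ in $\tri$, the edge condition there gives $t_3(AE)+t_4(AE)+t_5(AE)=2\pi$, so the sum is $\pi$. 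The identical computation at the vertex $E$, using~\eqref{eqn:t2}, handles $t_2$.

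For (iii) it suffices to treat the edge $BC$, the cases $CD$ and $DB$ following by the cyclic symmetry $B\to C\to D\to B$, which permutes $t_3,t_4,t_5$ and fixes $t_1,t_2$ and the whole construction. Using opposite--edge equality $t_1(BC)=t_1(AD)$ and $t_2(BC)=t_2(ED)$ together with the defining formulas, $t_1(BC)+t_2(BC)=\big(t_4(AD)+t_4(ED)\big)+\big(t_5(AD)+t_5(ED)\big)$; applying the vertex--sum identity at $D$ inside $t_4$ and inside $t_5$ converts the two brackets into $\pi-t_4(CD)$ and $\pi-t_5(DB)$, and then opposite--edge equality $t_4(CD)=t_4(AE)$, $t_5(DB)=t_5(AE)$ plus the edge condition at $AE$ give $t_1(BC)+t_2(BC)=2\pi-t_4(AE)-t_5(AE)=t_3(AE)=t_3(BC)$, which is the first equation of~\eqref{eqn:pi}. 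Thus $\tri'$ carries an angle structure coinciding with that of $\tri$ away from the move, which is the claim.

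I do not anticipate any real obstacle: the system~\eqref{eqn:pi}--\eqref{eqn:t2} looks over-determined, but each of the nine edge equations and two triangle--sum equations collapses to the single identity $t_3(AE)+t_4(AE)+t_5(AE)=2\pi$ that the input angle structure already satisfies. The only point requiring genuine care is the bookkeeping --- keeping straight which dihedral angle of which of $t_1,\dots,t_5$ sits at which edge, and in particular the opposite--edge identifications inside each of the five tetrahedra; writing those out is precisely how one obtains the explicit formulas referred to in Lemma~\ref{lem:geom}.
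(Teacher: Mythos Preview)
Your argument is correct and follows essentially the same route as the paper: define the new angles via~\eqref{eqn:t1}--\eqref{eqn:t2}, use the edge condition $t_3(AE)+t_4(AE)+t_5(AE)=2\pi$ to check the triangle sums, and then derive~\eqref{eqn:pi} from opposite-edge identifications. The only cosmetic difference is that the paper bounds each new angle above by $\pi$ directly, whereas you (more economically) infer this from positivity together with the sum being $\pi$.
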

\begin{proof}
	The 3-2 move is combinatorially valid as the resulting tetrahedra and their gluings are well defined. Let us have $t_1=ABCD$ and $t_2=BCDE$.
	
	The system composed of Equations~\ref{eqn:pi}, \ref{eqn:t1} and \ref{eqn:t2} is linear with 6 unknowns: the dihedral angles of $t_1$ and $t_2$. We need to check if this system admits a solution where the angles are in $(0,\pi)$. Note that as $t_3(AE)$, $t_4(AE)$, and $t_5(AE)$ are the angles around the edge $AE$, their sum is $2\pi$.
	
	{\setlength{\mathindent}{0cm}\begin{tabular}{rllll}
			$t_1(AB) =$ & $ t_3(AB)+t_5(AB)$ & $=$ & $ 2\pi - (t_3(AE) + t_3(AC) + t_5(AE) + t_5(AD))$\\
			$<$& $2\pi - (t_3(AE) + t_5(AE))$ & $=$ & $t_4(AE)$\\
			$<$& $\pi$ & &
	\end{tabular}}
	
	\begin{tabular}{r}
		$t_1(AB) = t_3(AB)+t_5(AB)>0$
	\end{tabular}
	
	Thus, if the system admits a solution, the dihedral angles of $t_1$ and $t_2$ are in $(0,\pi)$ by symmetry.
	
	Now let us check if all the equations can be satisfied. From Equation~\ref{eqn:t1} and \ref{eqn:t2}:
	\[t_1(AC) + t_2(EC) = t_3(AC)+t_4(AC)+t_3(EC)+t_4(EC) = 2\pi - (t_3(AE)+t_4(AE)) = t_5(AE)\]
	Thus the equations of Equation~\ref{eqn:pi} are consequences of the other two. The system is linear with 6 independent equations and 6 unknowns, it admits a solution.
	
	Finally we have:
	
	{\setlength{\mathindent}{0cm}\begin{tabular}{rllll}
			$t_1(AB)+t_1(AC)+t_1(AD)=$& $t_3(AB)+t_5(AB) + t_3(AC)+t_4(AC) + t_4(AD)+t_5(AD)$ \\
			$=$ &  $3\pi-(t_3(AE)+t_4(AE)+t_5(AE))$ \\
			$=$ &$\pi$
	\end{tabular}}

	Thus the dihedral angles of $t_1$ sum to $\pi$, the same goes for $t_2$ by symmetry. And the move is always valid as long as we started with an angle structure.  
\end{proof}

\subsubsection{The 2-3 move}

\begin{lemma}
	Let $\tri$ be a triangulation with angle structure, let $t_1=ABCD$ and $t_2=BCDE$ be two distinct tetrahedra glued along a face.
	A 2-3 geometric Pachner move can be performed at $BCD$ if and only if $\forall e\in\{BC,CD,DB\},\ t_1(e)+t_2(e)<\pi$.
	\label{lem:23geom}
\end{lemma}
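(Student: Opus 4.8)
The plan is to run the same kind of analysis as in the proof of Lemma~\ref{lem:32geom}, but with the roles reversed: the dihedral angles of $t_1$ and $t_2$ are now the constants, and those of $t_3 = ABCE$, $t_4 = ACDE$, $t_5 = ADBE$ are the nine unknowns of the linear system \eqref{eqn:pi}--\eqref{eqn:t2} (three per tetrahedron, after identifying opposite edges), together with the three angle-sum relations of $t_3,t_4,t_5$ and the edge relation around the new edge $AE$. First I would record the opposite-edge coincidences $t_3(AE)=t_3(BC)$, $t_4(AE)=t_4(CD)$, $t_5(AE)=t_5(DB)$ (disjoint vertex pairs in each of $ABCE$, $ACDE$, $ADBE$). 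Then \eqref{eqn:pi} \emph{directly} fixes the three dihedral angles around $AE$: $t_3(AE)=t_1(BC)+t_2(BC)$, $t_4(AE)=t_1(CD)+t_2(CD)$, $t_5(AE)=t_1(DB)+t_2(DB)$. Summing these and using that the three distinct dihedral angles of $t_1$ (resp.\ of $t_2$) sum to $\pi$ gives $t_3(AE)+t_4(AE)+t_5(AE)=2\pi$, so the edge relation at $AE$ holds automatically; each of these angles is positive automatically, and being $<\pi$ is \emph{exactly} the stated condition $t_1(e)+t_2(e)<\pi$ for $e\in\{BC,CD,DB\}$. This already gives the ``only if'' direction: if the move is geometric then, since among $t_3,t_4,t_5$ only $t_3$ meets the edge $BC$, the angle sum around $BC$ forces $t_3(BC)=t_1(BC)+t_2(BC)$, and being one of three positive angles of $t_3$ summing to $\pi$ it is $<\pi$; likewise for $CD$ and $DB$.

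For the ``if'' direction I would solve the rest. With $t_3(AE),t_4(AE),t_5(AE)$ known, the six remaining unknowns $t_3(AB),t_3(AC),t_4(AC),t_4(AD),t_5(AD),t_5(AB)$ are constrained by the three angle-sum relations of $t_3,t_4,t_5$ (each now reading ``two unknowns sum to a known quantity''), the three equations \eqref{eqn:t1}, and the three equations \eqref{eqn:t2} rewritten via opposite edges (e.g.\ $t_2(EB)=t_3(AC)+t_5(AD)$). Eliminating variables, each of the six angles becomes an affine function of the single free parameter $x:=t_3(AB)$; I expect the equations coming from \eqref{eqn:t2} to collapse to identities under the vertex angle-sums in $t_1$ and $t_2$, exactly as the three equations of \eqref{eqn:pi} turned out to be consequences of the others in Lemma~\ref{lem:32geom}. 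Hence the solution set is a line, and it remains to decide when it meets the region where all nine angles lie in $(0,\pi)$.

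Imposing ``in $(0,\pi)$'' on the six affine functions of $x$ produces three lower bounds and three upper bounds on $x$ --- the ``$<\pi$'' half of each condition being automatic once its partner angle is positive and the relevant $t_1(e)+t_2(e)$ is $<\pi$ --- so a feasible $x$ exists if and only if each lower bound is strictly below each upper bound, which is nine inequalities. Checking them individually, after substituting the values of $t_3(AE),t_4(AE),t_5(AE)$ and repeatedly using the opposite-edge identities $t(UV)=t(XY)$ and the vertex angle-sums in $t_1$ and $t_2$, three of the nine reduce exactly to $t_1(BC)+t_2(BC)<\pi$, $t_1(CD)+t_2(CD)<\pi$, $t_1(DB)+t_2(DB)<\pi$ (one per edge of the face), and the other six reduce to the positivity of a single dihedral angle of $t_1$ or $t_2$, hence hold automatically; together with the ``only if'' direction this proves the equivalence. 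The routine but error-prone part --- the step I would be most careful about --- is precisely this last bookkeeping: nine short manipulations in which one must keep straight which edges are opposite in each of $t_1,\dots,t_5$ and which angle-sum to invoke; I would use the $B\leftrightarrow C\leftrightarrow D$ symmetry of the common face $BCD$ to cut the cases down to three.
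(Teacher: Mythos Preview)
Your proposal is correct and follows the same overall architecture as the paper: fix $t_3(AE),t_4(AE),t_5(AE)$ from \eqref{eqn:pi} (giving the ``only if'' direction and the edge relation at $AE$), observe the remaining system is a one-parameter family, and then argue that some point on that line has all six remaining angles positive. Your bookkeeping is accurate; in particular the three equations \eqref{eqn:t2} do collapse to identities once \eqref{eqn:t1} and the angle-sums are imposed, and the nine inequalities $L_i<U_j$ reduce exactly as you claim (three to $\sigma_3,\sigma_4,\sigma_5<\pi$, six to positivity of a single angle of $t_1$ or $t_2$).

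Where you differ from the paper is in the feasibility step. The paper does not compute the interval of admissible parameters explicitly; instead it packages the argument into an auxiliary lemma (Lemma~\ref{lem:aux23geom}) which shows, by a short contradiction on the three cyclic cases, that one of the six free angles can be set to $0$ with the other five non-negative, and then perturbs by a small $\lambda$ to get strict positivity. Your route is more explicit and self-contained: you parameterize by $x=t_3(AB)$, write down the three lower and three upper bounds, and check the nine pairwise comparisons directly. This buys you a sharper statement (you actually identify the feasible interval) and avoids the auxiliary lemma altogether, at the cost of nine small computations rather than three; the paper's version keeps the case analysis shorter but leaves the feasible set implicit. Either way the $B\!\leftrightarrow\!C\!\leftrightarrow\!D$ symmetry you invoke is what makes the checking manageable.
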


\begin{proof}
	The 2-3 move is combinatorially valid as the resulting tetrahedra and their gluings are well defined. Let us have $t_3 = ABCE$, $t_4 = ACDE$ and $t_5 = ADBE$.
	
	From Equations~\ref{eqn:pi} we have that if $\exists e\in\{BC,CD,DB\},\ t_1(e)+t_2(e)\geq\pi$ then one of the resulting tetrahedra will have a dihedral angle larger than $\pi$, preventing the validity of the move. Let us assume it is not the case.
	
	The system composed of Equations~\ref{eqn:pi}, \ref{eqn:t1} and \ref{eqn:t2} is linear with 9 unknowns: the dihedral angles of $t_3$, $t_4$ and $t_5$. From Equations~\ref{eqn:pi}, the values of $t_3(AE)$, $t_4(AE)$, and $t_5(AE)$ are fixed and we have:
	{\setlength{\mathindent}{0cm}
		\begin{multline*}
			t_3(AE) + t_4(AE) + t_5(AE) \\= t_1(BC)+t_2(BC)+t_1(CD)+t_2(CD)+t_1(DB)+t_2(DB) = 2\pi
	\end{multline*}}
	Thus the constraint around the edge $AE$ is satisfied.
	
	Summing the equations of Equation~\ref{eqn:t1} on the one hand, and the equations of Equation~\ref{eqn:t2} on the other hand both gives:
	\[\pi = t_3(AB)+t_5(AB)+t_3(AC)+t_4(AC)+t_4(AD)+t_5(AD)\]
	
	We also have that the sum of the dihedral angles of $t_3$ is $\pi$ (and by symmetry it is true for $t_4$ and $t_5$):
	
	{\setlength{\mathindent}{0cm}
		\begin{align*}
			t_3(AB)+t_3(AC)+t_3(AE) & = t_1(BC)+t_2(BC)+t_1(AB)+t_1(AC) - t_5(AB)-t_4(AC) \\
			& = t_1(BC)+t_1(AB)+t_1(AC)+t_2(BC)-t_2(ED) \\
			& = \pi
	\end{align*}}

	Now, let us note that fixing any unknown fixes the values of the others, thus the system has exactly one degree of freedom: if $(t_3(AB),t_3(AC),t_4(AC),t_4(AD),t_5(AD),t_5(AB))$ is a solution to the system, then for any $\lambda\in\RR$, $(t_3(AB)+\lambda,t_3(AC)-\lambda,t_4(AC)+\lambda,t_4(AD)-\lambda,t_5(AD)+\lambda,t_5(AB)-\lambda)$ is also a solution. This is discussed in Remark~\ref{rem:23holo}. We denote by $\Lambda_+$ the set of dihedral angle for which $\lambda$ is added, and by $\Lambda_-$ its complement.
	
	The last step of the proof is to show that there exists a choice of $\lambda$ such that all the dihedral angles are positive, given that the dihedral angles of the tetrahedra sum to $\pi$ is enough to have them in $(0,\pi)$. 
	
	By Lemma~\ref{lem:aux23geom}, there is always a choice of dihedral angle such that fixing it to 0 ensures that the elements of $\Lambda_+$ are positive and those of $\Lambda_-$ are non negative (or $\Lambda_-$ and $\Lambda_+$ respectively, depending on whether the selected angle is in $\Lambda_+$ or $\Lambda_-$). If the elements of $\Lambda_+$ are positive, then picking a sufficiently small negative $\lambda$ to modify the dihedral angles will maintain the elements of $\Lambda_+$ positive, and will turn positive those of $\Lambda_-$, in the other case, in the same way a small positive $\lambda$ will ensure the positivity of all the angles.
	
	Thus there exists an angle structure corresponding to and coherent with the result of the Pachner move.
	
\end{proof}

\begin{lemma}
	With the notations and the assumption of the proof of Lemma~\ref{lem:23geom}, there is always a choice of dihedral angle $x$ in  $\{t_3(AB),t_3(AC),t_4(AC),t_4(AD),t_5(AB),t_5(AD)\}$ such that fixing $x=0$ leads to the elements of $\Lambda_+$ being positive and those of $\Lambda_-$ being non negative (or $\Lambda_-$ and $\Lambda_+$ respectively).
	\label{lem:aux23geom}
\end{lemma}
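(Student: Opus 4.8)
The plan is to analyze the one-parameter family of solutions described in the proof of Lemma~\ref{lem:23geom} and extract, from the six angles $t_3(AB), t_3(AC), t_4(AC), t_4(AD), t_5(AB), t_5(AD)$, the one which becomes zero ``first'' as we slide $\lambda$. Concretely, fix any solution $(a_1,\dots,a_6)$ of the linear system and recall that adding $\lambda$ to the three angles in $\Lambda_+$ and subtracting it from the three in $\Lambda_-$ yields another solution. The three $\Lambda_+$-angles are decreasing functions of $-\lambda$ and the three $\Lambda_-$-angles are increasing functions of $-\lambda$ (or vice versa, depending on the sign convention). Pushing $\lambda$ in the direction that increases the $\Lambda_+$-angles, the first angle to vanish is the smallest element of $\Lambda_-$; call it $x$, and let $\lambda^\star$ be the value of the parameter at which $x=0$. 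I would set $x = 0$ by this choice of $\lambda^\star$.

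The key steps, in order. First, record explicitly that along the family, each angle is of the form $a_i \pm \lambda$, so the set $\{$angle $\geq 0\}$ is a closed interval (possibly a half-line) in the $\lambda$-axis, and its boundary is attained precisely when one of the six angles equals $0$. Second, observe that at an endpoint of this interval at least one $\Lambda_-$-angle and at least one $\Lambda_+$-angle cannot simultaneously be zero unless all are zero at that endpoint, because moving toward that endpoint decreases one family and increases the other; hence a generic endpoint is where exactly the minimal angle of one of the two classes vanishes. Third, pick the endpoint $\lambda^\star$ corresponding to the $\Lambda_-$ side (by symmetry the argument with $\Lambda_+$ is identical, which is the parenthetical alternative in the statement): at $\lambda^\star$ every $\Lambda_-$-angle is $\geq 0$ with at least the minimal one equal to $0$, while every $\Lambda_+$-angle is strictly positive, since the $\Lambda_+$-angles were bounded below along the whole admissible interval by their values at the \emph{other} endpoint and that other endpoint is not $\lambda^\star$ (using the combinatorial non-degeneracy assumption $t_1(e)+t_2(e) < \pi$ carried over from Lemma~\ref{lem:23geom}, which guarantees the interval is nonempty and nondegenerate). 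Fourth, conclude that $x$ is the required angle.

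The place I expect to do real work is the third step: showing the $\Lambda_+$-angles are \emph{strictly} positive at $\lambda^\star$, not merely non-negative. This needs the fact that the admissible $\lambda$-interval has nonempty interior, which in turn comes from exhibiting at least one solution with all angles strictly positive, or at least from the constraint $t_1(e)+t_2(e)<\pi$ forcing the fixed angles $t_3(AE), t_4(AE), t_5(AE)$ to lie strictly in $(0,\pi)$, leaving genuine slack. One has to check that the two endpoints of the interval are distinct — equivalently, that it is impossible for a $\Lambda_+$-angle and a $\Lambda_-$-angle to vanish at the same $\lambda$ — and this is exactly where the hypothesis $t_1(e)+t_2(e)<\pi$ enters, since if the interval degenerated to a point every angle would be forced to an extreme value contradicting that strict inequality. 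Once the interval is known to be nondegenerate, picking the $\Lambda_-$-endpoint makes the $\Lambda_+$-angles strictly positive automatically, and the lemma follows.
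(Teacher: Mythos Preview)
Your framing is natural, but the argument has a genuine gap at exactly the point you flag as ``real work'': the nonemptiness (and nondegeneracy) of the admissible $\lambda$-interval. You propose to pick an endpoint of the interval $\{\lambda : \text{all six angles} \geq 0\}$, but the existence of such an endpoint is equivalent to the lemma itself. Your suggested justification --- that $t_1(e)+t_2(e)<\pi$ forces $t_3(AE), t_4(AE), t_5(AE)\in(0,\pi)$ and this ``leaves genuine slack'' --- does not establish that the interval is nonempty: those three quantities are constants along the family and say nothing directly about whether $\max_{i\in\Lambda_+}(-a_i) \le \min_{j\in\Lambda_-}(b_j)$. And your alternative justification, exhibiting a solution with all angles strictly positive, is circular: that is what Lemma~\ref{lem:23geom} deduces \emph{from} this lemma.

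What the paper does instead is a direct computation that bypasses the circularity. It fixes a $\Lambda_-$ angle (say $t_3(AC)$) to $0$ and uses Equations~\ref{eqn:t1}--\ref{eqn:t2} together with the opposite-edge identifications to show that each $\Lambda_+$ angle then equals a \emph{known positive} quantity from the original angle structure on $t_1,t_2$ (e.g.\ $t_3(AB)=\pi-t_3(AE)$, $t_4(AC)=t_1(AC)$, $t_5(AD)=t_2(EB)$). So positivity of $\Lambda_+$ is automatic and requires no assumption on the interval. The only question left is whether the remaining two $\Lambda_-$ angles are non-negative; assuming all three $\Lambda_-$ choices fail yields three strict inequalities that sum to $t_1(AB)+t_1(AC)+t_1(AD)<t_2(EB)+t_2(EC)+t_2(ED)$, contradicting that both sides equal $\pi$. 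Your outline would become a proof if you replaced the interval-nonemptiness argument with this explicit verification that zeroing a $\Lambda_-$ angle forces $\Lambda_+$ positive, and then ran the contradiction on the $\Lambda_-$ side.
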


\begin{proof}
	Let us assume the Lemma is false.
	
	If $t_3(AC)$ is set to $0$, then $$t_3(AB)=\pi-t_3(AE)>0,$$ $$t_5(EB)=t_2(EB)>0$$ $$t_4(ED)=t_1(AC)>0.$$ Furthermore $$t_4(AD) = \pi - (t_4(AC) + t_4(AE)) = t_1(AD)-t_2(EB)$$ and $$t_5(AB)=t_2(ED)-t_1(AC).$$ Note that the elements of $\Lambda_+$ are positive and the elements of $\Lambda_-$ are non-negative if $t_4(AD)$ and $t_5(AB)$ are non-negative. Let us assume $t_1(AD)-t_2(EB)<0$.
	
	If $t_4(AD)$ is set to $0$, like previously the elements of $\Lambda_+$ are positive, and $$t_3(AC)=t_2(EB)-t_1(AD)>0$$ $$t_5(AB)=t_1(AB)-t_2(EC).$$ If the lemma is false, then $t_1(AB)-t_2(EC)<0$.
	
	If $t_5(AB)$ is set to $0$, like previously the elements of $\Lambda_+$ are positive, and $$t_3(AC)=t_1(AC)-t_2(ED)$$ $$t_4(AD)=t_2(EC)-t_1(AB)<0$$. If the lemma is false, then $t_1(AC)-t_2(ED)<0$.
	
	The previous deductions give $$t_1(AB)+t_1(AC)+t_1(AD)<t_2(EB)+t_2(EC)+t_2(ED),$$ which is absurd as these two sums are equal to $\pi$. The assumption that $t_1(AD)-t_2(EB)<0$ must be false, and by symmetry $t_2(ED)-t_1(AC)<0$ will be false too. 
	
	This is absurd and the Lemma is true.
	
\end{proof}

\begin{remark}[Non-conservation of the volume]
	\label{rem:23holo}
	In Lemma~\ref{lem:23geom}, several angle structures are possible after the 2-3 move as it can be seen in the poof of Lemma~\ref{lem:aux23geom}. Conversely, several angle structures lead to the same result for the 3-2 move. This is because angle structures are blind to shearing singularities. If such a singularity exists, a 3-2 move can still be performed while maintaining the whole angle structure, but it will decrease the hyperbolic volume of the structure. It is due to the fact that the singularity, which maximized the volume, is fixed in the process.
\end{remark}

\subsubsection{Computing the angles}

\begin{lemma}
	Let $\tri$ be a triangulation with angle structure, if a geometric Pachner move can be performed, then the new angle structure can be computed in a constant number of arithmetical operations.
	\label{lem:anglevalues}
\end{lemma}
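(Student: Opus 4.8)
The plan is to revisit the two linear systems solved in the proofs of Lemma~\ref{lem:32geom} and Lemma~\ref{lem:23geom}, and to check that in each case the new dihedral angles are obtained by evaluating a fixed, triangulation-independent number of closed-form expressions, possibly after a bounded case distinction. Since, by definition of a geometric Pachner move, the angles of all tetrahedra not involved in the move are left unchanged, it suffices to bound the cost of computing the dihedral angles of $t_1,t_2$ (for a 3-2 move) or of $t_3,t_4,t_5$ (for a 2-3 move); all other angles are simply copied over.

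For the 3-2 move this is immediate: Equations~\ref{eqn:t1} and~\ref{eqn:t2} already express each of the six dihedral angle values of $t_1$ and $t_2$ directly as a sum of two known angles of $t_3,t_4,t_5$ (for instance $t_1(AB)=t_3(AB)+t_5(AB)$). So the new angle structure is read off with a constant number of additions, and Lemma~\ref{lem:32geom} guarantees the outcome is a genuine angle structure. For the 2-3 move, Equations~\ref{eqn:pi} first give $t_3(AE),t_4(AE),t_5(AE)$ as sums of two known angles each. It then remains to compute $t_3(AB),t_3(AC),t_4(AC),t_4(AD),t_5(AB),t_5(AD)$, which by the analysis in Lemma~\ref{lem:23geom} form a one-parameter family parametrised by $\lambda\in\RR$. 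I would first fix a particular member of this family exactly as in the proof of Lemma~\ref{lem:aux23geom}: that argument is constructive, inspecting at most three candidate ``pivot'' angles and testing sign conditions, each costing $O(1)$ arithmetic operations, and it certifies that setting the chosen angle $x=0$ makes the three angles of one of $\Lambda_+,\Lambda_-$ strictly positive and the other three non-negative; the remaining five angles are then each a fixed linear combination of known quantities. Finally I would choose the shift explicitly, e.g.\ $\lambda=-\tfrac12\min$ over the three angles guaranteed strictly positive (or $+\tfrac12$ of that minimum in the symmetric case) — a minimum of three numbers followed by one division — so that, by the argument already used in Lemma~\ref{lem:23geom}, the six resulting angles are strictly positive, and the sum-to-$\pi$ relations established there place them in $(0,\pi)$.

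The one point that needs care, and which I expect to be the main obstacle, is translating the existence statement of Lemma~\ref{lem:aux23geom} — which is phrased as a proof by contradiction — into an honest bounded procedure. The resolution is to observe that the contradiction argument walks through an explicitly enumerated, constant-length list of pivot choices, so trying them in order and stopping at the first one passing the sign tests terminates in a constant number of steps, and that once the pivot is fixed a safe value of $\lambda$ can be read off immediately from the three positive angles. Everything else is routine bookkeeping: only the $O(1)$ many dihedral angle values attached to the (at most five) tetrahedra touched by the move are recomputed, each by an $O(1)$-size formula, so the total number of arithmetical operations is bounded by a universal constant independent of the number $n$ of tetrahedra.
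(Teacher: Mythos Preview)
Your argument is correct. For the 3-2 move you do exactly what the paper does: read the six new angles off Equations~\ref{eqn:t1} and~\ref{eqn:t2}. For the 2-3 move, however, you take a genuinely different route. The paper does \emph{not} stay in the linear angle framework; instead it passes to complex edge parameters, places $B,E,D$ at $0,1,\infty$ in the upper half-space model, and obtains the edge parameter of $BD$ in the new tetrahedron $ABDE$ as the product of the edge parameters of $BD$ in $ABCD$ and $BCDE$ (and similarly for the other two new tetrahedra). This singles out one specific point of the one-parameter family, namely the one with no shearing singularity around the new edge $AE$, and the positivity check reduces to the hypothesis $t_1(e)+t_2(e)<\pi$ of Lemma~\ref{lem:23geom}. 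Your approach instead mines the proof of Lemma~\ref{lem:aux23geom} for an explicit pivot, then shifts by an explicit $\lambda$; this is perfectly valid, more elementary (no conversion to and from shape parameters), and makes the constant bound completely transparent. What the paper's choice buys is a geometrically canonical representative that is arguably better suited for resuming the volume maximisation (cf.\ Remark~\ref{rem:23holo}); what your choice buys is that it never leaves the real-linear setting of angle structures and recycles the earlier lemmas verbatim.
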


\begin{proof}
	\noindent
	\textbf{3-2 move.\ } From the proof of Lemma~\ref{lem:32geom}, the new angles can be obtained by solving a system of equations with six equations.

	\noindent
	\textbf{2-3 move.\ } First, we remind that it is possible to convert angles to edge parameters \textit{et vice versa}~\cite{futer2010angled}. We still consider the notations of Figure~\ref{fig:pachlem}, but we consider the situation in the upper halfspace with $B$ sent to $0$, $E$ sent to $1$, and $D$ sent to $\infty$ (Figure~\ref{fig:anglecalc}). The position of $C$ depends on the edge parameter of $BD$ in $BCDE$, and the position of $A$ depends on the edge parameter of $BD$ in $ABCD$. Then the edge parameter of $BD$ in $ABDE$ is just the product of the two edge parameter. This construction is valid by Lemma~\ref{lem:23geom}, as the sum of the arguments of both shape parameters is smaller than $\pi$. We can use the same procedure to find an edge parameter for $ABCE$ and $ACDE$.
	
	Note that this construction leaves no shearing singularities around $AE$.
\end{proof}

\begin{figure}
	\centering
	\includegraphics[width = 0.5\columnwidth]{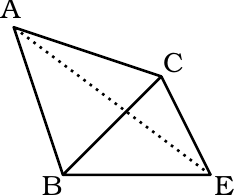}
	\caption{Representation of the proof of Lemma~\ref{lem:anglevalues}, seen from the point $D$. The 2-3 move will delete the line $BC$ in favor of $AE$.}
	\label{fig:anglecalc}
	
\end{figure}

\subsection{Getting rid of flat tetrahedra}

Let $\tri$ be a triangulation with an angle structure admitting a flat tetrahedron $t$, and $(e,e')$ the associated edges with dihedral angle $\pi$. By Lemma~\ref{lem:geom}, it is not possible to get rid of a tetrahedron with a geometric 2-3 move. Indeed, if an edge of $t$ has a dihedral angle equal to $\pi$, the second tetrahedron concerned by the 2-3 move must have a dihedral angle equal to 0, and consequently a 2-3 move will produce a flat tetrahedron. 

In order to get rid of $t$, our strategy is to turn either $e$ or $e'$ into an edge of degree three, \ie\ to put it at the center of three tetrahedra on which a 3-2 move can be performed (central edge of Figure~\ref{fig:pachlem}, right). 
Note that edges of degree 2 prevent the existence of an angle structure as they force the two tetrahedra sharing the edge to be flat. As a consequence, we assume these configurations are removed, which can be done by \snappy's simplification for instance, see Section~\ref{sec:expe}. Consequently, all edges have degree at least 3, and our strategy focuses on \emph{reducing} the incidence degree of angle $\pi$ edges.

\begin{figure}
	\centering
	\includegraphics[width= \columnwidth]{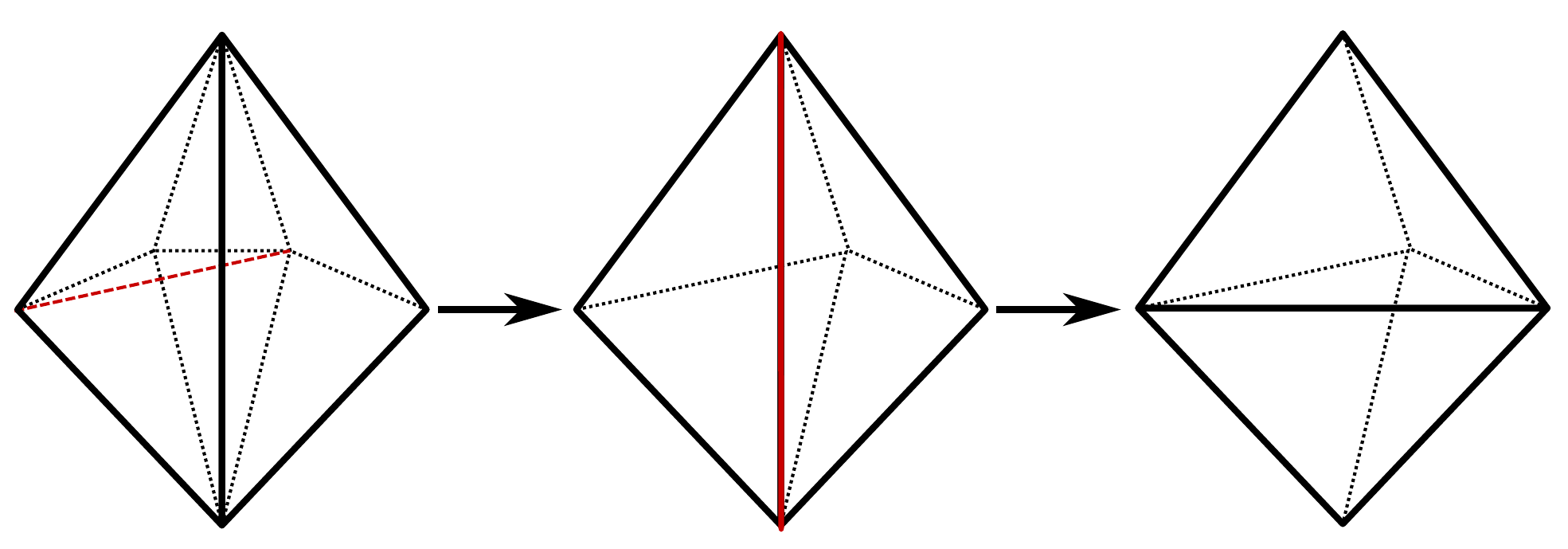}
	\caption{A sequence of moves getting rid of a flat tetrahedron. The bold vertical edge is contained in 4 tetrahedra, three of which are represented behind the edge. The fourth tetrahedron is implicit, situated at the front, and flat. %
		First, create the red edge of the first drawing with a 2-3 move. Then delete the red edge of the second drawing with a 3-2 move.}
	\label{fig:comb23}
\end{figure} 

\medskip
\noindent
{\bf To reduce the degree of an edge $e$,} the strategy is to perform 2-3 moves on the tetrahedra containing $e$, such that each move reduces the degree of the edge by one. Either for topological (tetrahedron glued to itself) or geometrical reasons, these moves will not always be possible, and the order in which they are done matters. This is described in Figure~\ref{fig:comb23}.

\begin{remark}
	Doing a 2-3 move does not always reduce the degree of the edge, \eg, when a tetrahedron is represented several times around an edge. However, doing a move that does not decrease the degree of the edge may delete the multiple occurrences of a tetrahedron around the edge and allow to continue with the simplification.  
\end{remark}

\begin{remark}
	When a 2-3 move is performed around $e$, the value of the dihedral angle of the new tetrahedron around $e$ is equal to the sum of the previous two dihedral angles around $e$. Since the sum of the dihedral angles of a tetrahedron is equal to $\pi$, this means successfully reducing the degree of $e$ may unlock previously forbidden moves. 
\end{remark}

\medskip
\noindent
{\bf Recursive moves.} If no geometric Pachner move is possible on an edge $e$, we attempt to reduce the degree of a nearby edge $e_f$, \ie, an edge for which there exists a tetrahedron containing $e$ and $e_f$; see Figure~\ref{fig:comb32} where $e_f$ is represented by the red dot, as it is seen from above. More precisely, let $f$ be a triangle containing $e$ such that the associated 2-3 move is forbidden for geometric reasons, then by Lemma~\ref{lem:geom} there is an edge $e_f$ of $f$ for which the associated dihedral angle is at least $\pi$. 

In consequence, reducing the degree of $e_f$ to three without modifying the tetrahedra containing $f$ and then performing a 3-2 move at $e_f$ will reduce the degree of $e$ by one; see Figure~\ref{fig:comb32}.

Note that in case a geometric Pachner move is not possible on any pair of tetrahedra containing $e_f$ either, we can call the procedure recursively in a neighborhood of $e_f$, using the argument above to pass from $e$ to $e_f$.

\begin{figure}
	\centering
	\includegraphics[width= \columnwidth]{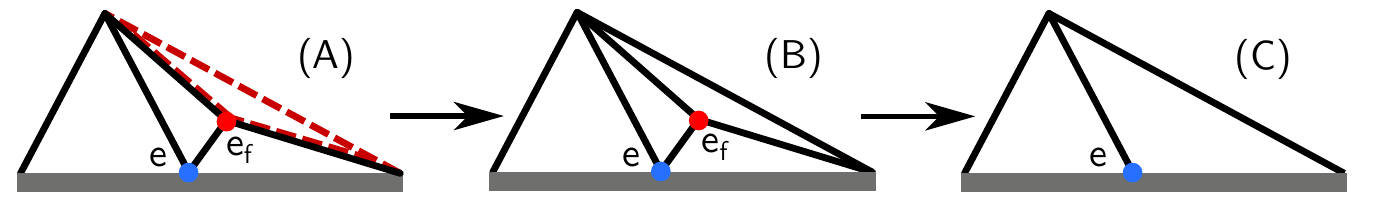}
	\caption{Sequences of moves to reduce the degree of an edge when no 2-3 move is available, seen from above (triangles represent sections of tetrahedra, edge section of triangles and dots section of edges). In all drawings, the gray rectangle represents the flat tetrahedron, $e$ (light blue dot) is the edge of the flat tetrahedron of which we want to reduce the degree. Assuming no 2-3 move is available, we pick another edge $e_f$ (red dot), if a 3-2 move can be performed on $e_f$, we are in case (B) and the 3-2 move reduces the degree of $e$ (case (C)). Otherwise, we are in case (A) we need to create the red dashed tetrahedron by decreasing the degree of $e_f$, this can be done by calling recursively our procedure on $e_f$.}
	\label{fig:comb32}
	
\end{figure} 

\begin{remark}
	The choice of $e_f$ in $f$ is unique: it is not possible to have two dihedral angles larger than $\pi$; selecting a move to reduce the degree of $e$ boils down to selecting a triangle containing $e$.
\end{remark}

\medskip

{\bf Procedure summary.} The procedure to get rid of a flat tetrahedron is a tree-like \emph{backtracking search}, each branch corresponding to a choice of 2-3 or 3-2 move to perform. The moves used are geometric to preserve the volume maximization advancement. A flat tetrahedron can only be deleted with a 3-2 move: the first step is to \emph{reduce the degree} of one of its edges to three. To reduce the degree of an edge, the solution is to perform 2-3 moves on two tetrahedra containing this edge. If a 2-3 move is not possible for geometric reasons, we are in the situation of Figure~\ref{fig:comb32}, and the procedure to \emph{reduce the degree} can be called \emph{recursively} on a neighboring edge. Performing a 3-2 move on this latter edge resulting in a decrease of the degree of the initial one. The procedure ends when the initial edge has degree three and the flat tetrahedron can be removed or when no move can be applied.

\subsection{Implementation details}

The implementation faces several practical challenges.

\medskip

\noindent
{\bf Breaking infinite loops.} 
As such, the algorithm may loop infinitely on some instance, as 2-3 and 3-2 moves may reverse themselves. To counteract this phenomenon and avoid redundant modifications, a solution is to store at each modification the \emph{isomorphism signature} of the triangulation~\cite{regina}, characterizing uniquely the isomorphism type of the triangulation. 
This allows us to recognize already processed triangulations and break branches of the backtracking algorithms. Note however that this is costly compared to the Pachner moves, and it makes the procedure no longer local. 

\medskip

\noindent
{\bf Selecting the edge $e$.} When attempting to remove a flat tetrahedron, one needs to choose between the two $\pi$-angled edges $e$ and $e'$ to reduce. 
In our implementation, we consider both edges, and select the one that has the larger smallest angle in its link. This performs better than a random choice in our experiments.

\medskip

\noindent
{\bf Pruning the backtrack search.} Some triangulations may have edges of large degree (more than 10), which produces wide search trees. Additionally, the recursive calls to the degree reduction procedure may induce trees of large depth. Experimentally, the better strategy consists of exploring exhaustively the first levels of the tree. We set the width of exploration to $8$ and explore the first $2$ levels of the search tree.

\medskip

\noindent
{\bf Sequencing of Pachner moves.} Different sequencings of Pachner moves lead to different triangula\-tions. In practice, we favor 2-3 moves over recursive ones, which performs best. Additionally, among the 2-3 moves performed to get rid of a flat tetrahedron containing edge $e$ of angle $\pi$, we prioritize moves that eliminate tetrahedra for which $e$ has a small dihedral angle. Among the recursive ones, we used the same method as the one choosing between $e$ and $e'$, indeed choosing a recursive move boils down to choosing an edge to minimize its degree.

\section{Experiments}
\label{sec:expe}
In this section, we study the experimental performance of our approach to find a triangulation with a \chs, and compare its behavior with the software \snappy{}~\cite{snappy}. \snappy{} is the state-of-the-art software to study the geometric properties of knots and 3-manifolds, and is widely used in the low-dimensional topology community. Following this analysis, we propose and study a hybrid method with practical interest.

\medskip

\noindent
{\bf \snappy{}.} \snappy{}'s method is based on an implementation from Weeks, its uses a random re-triangulation followed by a simplification. The first step is constituted of $4n$ random 2-3 moves, where $n$ is the number of tetrahedra in the input triangulation. The simplification performs non-deterministic modifications to decrease the number of tetrahedra in the triangulation. Notably it removes some configurations that prevent angle structures from existing. The verification of the existence of a \chs\ is based on a Newton's method to solve the gluing equations~\cite{weekscompletehyperbolic}.

\medskip

\noindent
{\bf Data set.} We apply our algorithms to the census of prime hyperbolic knots with up to 19 crossings. This census has been constituted by the efforts of many researchers in the field, and recently completed with the exhaustive enumeration of all knots with crossing number smaller than 20\footnote{The census is available at \url{https://regina-normal.github.io/data.html}}~\cite{burton350}.

For each knot, given by a knot diagram, we compute a triangulation of the knot complement using \regina{}~\cite{regina}, and simplify it with \snappy{}. We then keep the triangulations admitting an angle structure but not a \chs, and hence requiring re-triangulation. All the others, without angle structure or admitting a \chs, were discarded. They constitute the \emph{Failure on first try} data of Table~\ref{tab:reginacensus}.

The knots are grouped by crossing numbers (from 14 to 19), and on whether they are alternating or not. In Section~\ref{subsec:expetri}, our experiments are run on the first 2500 knots of each of the 12 groups having an angle structure but no \chs, and in Section~\ref{sec:hybrid} we use the first 10.000 knots of the same data sets.

\subsection{Our program and current state of the code}

Our implementation is done in \texttt{Python}, to be able to interface with \regina\ and \snappy\ (and not \texttt{SnaPpea}). The idea behind using \texttt{Python} was also to enable shorter development times. The basic tools to handle triangulations where rewritten to have more control on the behavior of the simulation and we rely on external libraries only for the isomorphism signatures, some linear programming to find a point to begin the optimization, and to perform the optimization (we use the \texttt{SLSQP}~\cite{SLSQP} optimization method from SciPy~\cite{SciPy}). 

However this choice of language leads to costly computation times, decision was taken to not render public this version of the code and switch to \texttt{C++}, using another library to manipulate the triangulations. 

The version of the program presented in Section~\ref{sec:hybrid}\footnote{Available at \url{https://github.com/orouille/SnapPy}}, which outperforms the state of the art in terms of performance, is a simpler \texttt{C} version directly implemented in \snappy's kernel. We are in discussion with the developers to integrate this code in the library \snappy.

\subsection{Success rate and combinatorial performance}
\label{subsec:expetri}

{\bf In terms of Pachner moves.} Figure~\ref{fig:expself} (left) represents the rate of knots on which our algorithm succeeds to find a triangulation with a \chs, for all the 12 groups of knots. It represents the success rate as a function of the number of Pachner moves, the higher success rate groups are the alternating knots. With Figure~\ref{fig:expself} (right), which represents this success rate for optimization phases, we see that few Pachner moves are done on average between the phases, hinting that the convex optimization will likely be the time bottleneck.

While there is a significant difference between the efficiency on alternating and non-alternating knots, all the curves have the same behavior: the first few  Pachner moves are very effective. An important point is that, compared to the number of moves done by \snappy{}, at least four times the number of tetrahedra and then roughly the same number for the simplification times the number of re-triangulation, our method uses a much lower number of Pachner moves on average (see Figure~\ref{fig:expself}, left). This is of \emph{combinatorial interest}, as doing a small number of moves can mitigate the impact of the procedure on the properties of the triangulation, such as keeping a small treewidth for instance.

After the first few steps, the growth of the success rate slows down drastically. An interpretation of this phenomenon is that the tetrahedra created by the 2-3 moves tend to to be more flat than the original ones. This leads to an increase in the number of required re-triangulations and issues with floating point arithmetic.

\begin{figure}
	\centering
	\includegraphics[width = 7cm]{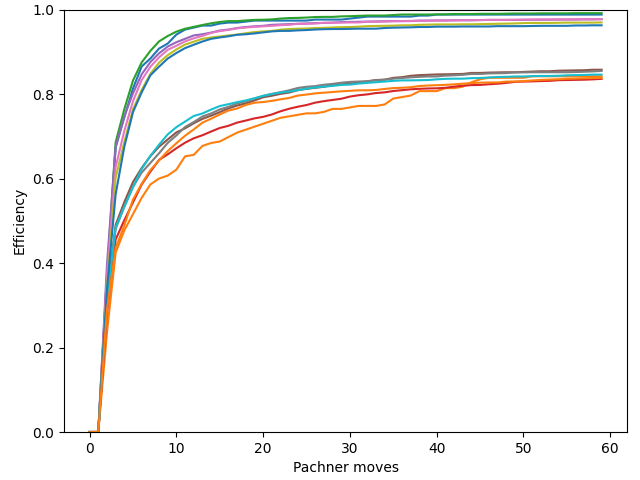} \hspace{0.5cm}
	\includegraphics[width = 7cm]{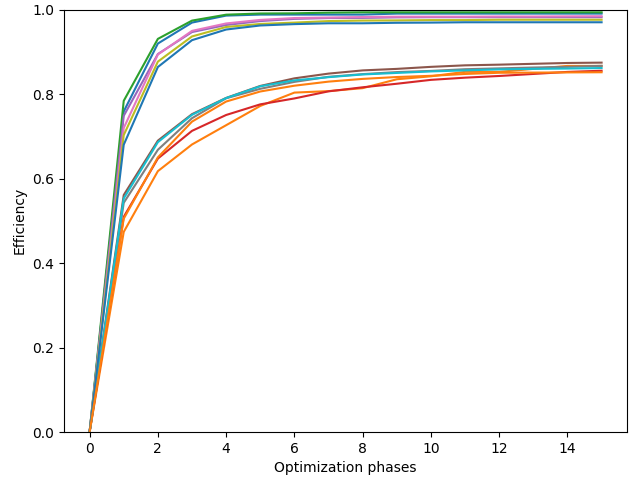}
	\caption{On the sample of triangulations, success rate in finding a \chs\ for different crossing numbers and alternabilities. The limit rate of success for alternating knots is 0.98, the limit rate for non-alternating knots is 0.87. Left: success rate against the number of Pachner moves. Right: success rate against the number of optimization phases.}
	\label{fig:expself}
	
\end{figure} 

\begin{figure}
	\centering
	\begin{tabular}{|r|c|c|c|c|c|c|c|c|c|c|c|c|}
		\hline
		& \multicolumn{5}{|c|}{\# of optim phases} & & \multicolumn{5}{|c|}{\# of moves} \\
		\hline
		& 1 & 2 & 3 & 4 & 5 &  & 2 & 3 & 4 & 10 & 30 \\
		\hline
		Efficiency alter & 0.73 & 0.89 & 0.95 & 0.97 & 0.97 & & 0.36 & 0.63 & 0.72 & 0.92 & 0.97\\  
		\hline
		Efficiency non-alter & 0.53 & 0.67 & 0.74 & 0.78 & 0.81 & & 0.30 & 0.47 & 0.52 & 0.70 & 0.81\\
		\hline
	\end{tabular}
	\caption{Average of success rates for the alternating and non-alternating groups of knots of Figure~\ref{fig:expself}.} 
	\label{tab:self}
\end{figure}

\medskip

\noindent
{\bf In terms of \emph{re-triangulation}.} In order to compare the behaviors of our method and of the state of the art: we want to know if the two methods struggle on the same triangulations. Both methods using sequences of re-triangulations, we use the length of these sequences as metrics.

We compare in Figure~\ref{fig:expcomparebase} the average number of randomized re-triangulations required by \snappy{} to find a triangulation with a \chs, and the (deterministic) number of re-triangulations of our method. The majority of the \chs\ are found within few re-triangulations on average: both methods manage to find 78\% of them within two steps. 

However, it appears that for the manifolds requiring a large number of re-triangulations with either method, the other one will perform well on it: the performance of the methods are substantially orthogonal for the difficult cases. This phenomenon highlights the fact that the methods are \emph{complementary}.

\begin{figure}
	\centering
	\includegraphics[width = 11cm]{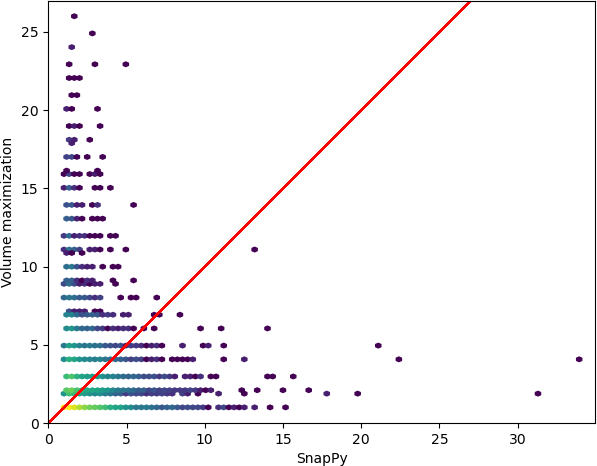}
	\caption{Number of re-triangulations required by our method over the number of re-triangulations required by \snappy{} on the 12 groups of knots, gathered in bins (the color indicates the log of the number of points inside). Since our method is deterministic, the points have discrete ordinates. The diagonal $x=y$ appears in red.}
	\label{fig:expcomparebase}
	
\end{figure} 

\medskip

\noindent
{\bf Limitations.} It is to be noted that our method suffers from several phenomena: there are manifolds on which our method fails, the reasons to this being convergence problems because of the floating point arithmetic, the creation of flatter tetrahedra or the volume modification mentioned in Remark~\ref{rem:23holo}. Furthermore some triangulations have several very high degree edges and looking for the correct sequence of geometric Pachner moves can be costly. In terms of time performance, the constraint convex optimization is an important bottleneck, as well as the computation and book-keeping of a large number of isomorphism signatures.

\medskip

However, in light of the previous results, we design a hybrid algorithm, mixing \snappy{}'s pipeline together with the heuristics based on localized geometric Pachner moves, in order to outperform both methods in terms of experimental timings.

\subsection{Hybrid algorithm and time performance}
\label{sec:hybrid}

As is, \snappy{} does not produce angle structures as defined in Section~\ref{sec:cassonrivin}, as it constructs \emph{negative tetrahedra}, i.e., tetrahedra whose shape has a negative imaginary part. Our hybrid method consists of calling \snappy{} to randomize and simplify the triangulation, and, in case the triangulation admits only few negative tetrahedra (less than four in our experiments), call the resolution of flat tetrahedra with geometric Pachner moves on these negative tetrahedra, as introduced in Section~\ref{sec:heuristic}. Loop until a \chs\ is found. 

Our implementation is done in {\tt C} directly in \snappy's kernel and consists of a simplified version of Section~\ref{sec:heuristic}'s method. For each negative tetrahedron, we try to reduce the degree of one of its edges down to 3 in turn using only geometric 2-3 moves, we try all the edges, and we do not use recursive moves or heuristics to perform the geometric moves.

The results of this method are summarized in Figure~\ref{fig:exphyb} (Figure~\ref{fig:exphybhist} is given to help read Figure~\ref{fig:exphyb}), where the time to compute the \chs\ are compared for the new hybrid method and the usual \snappy{} pipeline, on the first 10.000 triangulations of the datasets of Section~\ref{sec:expe}. 
The hybrid method using localized geometric Pachner moves shows, at worst, similar time performance compared to \snappy{}, and performs significantly better overall. On average, the hybrid method is 20\% faster, however, it is to be noted that, on most cases, few re-triangulations are required in which case the hybrid method and \snappy{} show naturally similar time performance. More interestingly, on cases requiring many more re-triangulations in the \snappy{} pipeline, the hybrid methods performs much better than \snappy{}, with running times up to 18 times faster.

As indicator of the global behavior of both algorithms, we indicate the linear regression of the data points (black dashed line, slope $= 0.4$) in Figure~\ref{fig:exphyb}, to illustrate that the dense region is not concentrated on the $x=y$ diagonal. Furthermore, this line suggests that if we are not in the dense region where almost all \chs\ are found instantly, then the gain is not 20\%, but 60\%.

\begin{figure}
	\centering
	\includegraphics[width = 13cm]{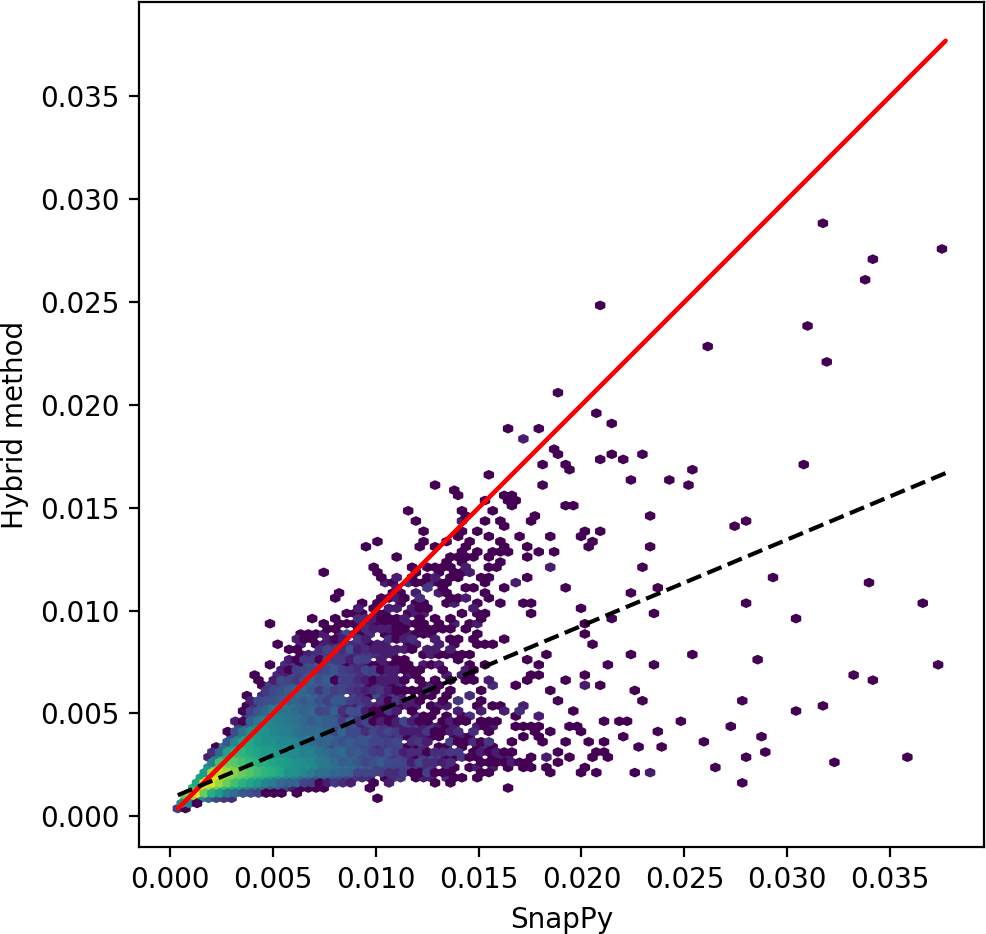}
	\caption{Time comparison of the hybrid method and \snappy{} in seconds for the 10.000 first triangulations of each crossing number of the data set of the previous section, gathered in bins (the color indicates the log of the number of points inside). Line $x=y$ is plain and red, a linear regression through the data set is dashed and black. For readability, four outlayers favoring the hybrid method are omitted. 
	}
	\label{fig:exphyb}
\end{figure}

\begin{figure}
	\centering
	\includegraphics[width = 9cm]{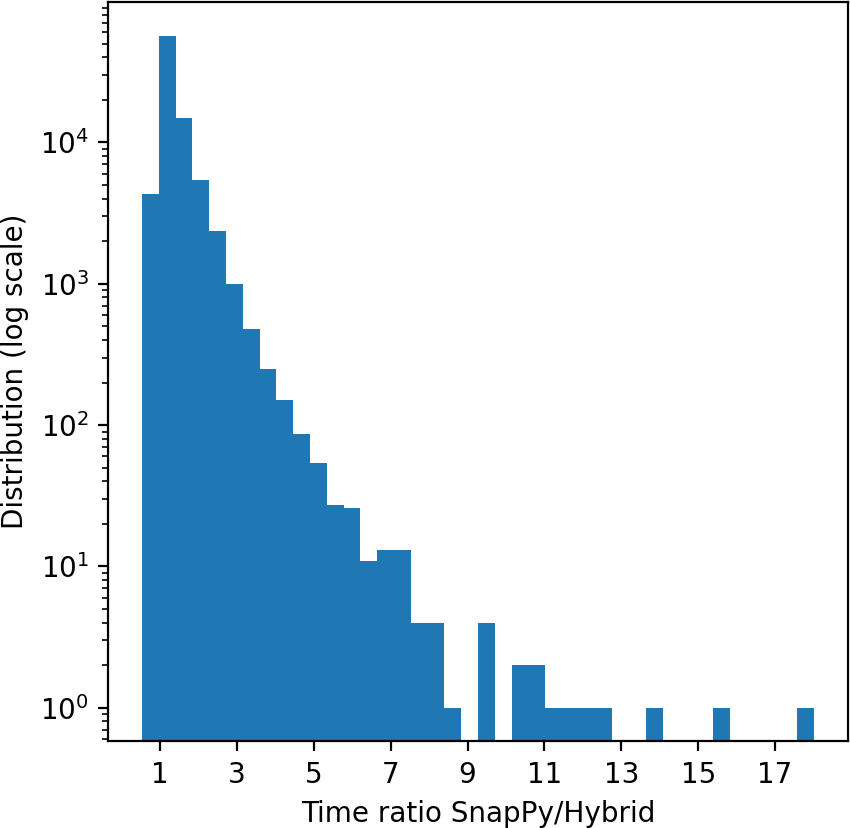}
	\caption{Distribution, in logarithmic scale, of the time required by \snappy{} over the time required by the hybrid method. 
	}
	\label{fig:exphybhist}
\end{figure} 

\section{Conclusion}
\label{sec:hypvolconcl}
The problematic of this work is the computation of complete hyperbolic structures on triangulated 3-manifolds. This can be done on computers by solving Thurston's gluing equations. The state-of-the-art method, implemented by \snappy, consists in trying to simplify the triangulation before using Newton's method to solve the equations. If this does not succeed, then the only strategy is to find another triangulation of the manifold and to start over. This work well in most cases, but can require a large number of re-triangulations in some cases. The key difficulty is to find a triangulation where the equations admits a solution

We have introduced a new method based on results form Casson and Rivin stating that maximizing the hyperbolic volume yields \chs. When it is not possible to find a solution to the equations on a triangulation, we use the result of the volume maximization to propose another one.

Our algorithm is complementary to \snappy{}, and in particular performs much better on almost all cases we have studied where \snappy{} struggled to find a solution to the \chs\ problem. While our current implementation of the whole method is not competitive due to being implemented in \texttt{Python}, we introduced a hybrid method that improves the state-of-the-art in most cases, specifically in difficult ones, without sacrificing \snappy's efficiency when it performs well. 

Our method proposes an improvement in term of computation time over the state-of-the-art, this is very important when running large scale experiments or when the manifolds studied are large. But our method has another interest: since it tries to avoid performing a large number of re-triangulations, we tend to preserve the triangulation whenever possible.

The method and its implementation can be improved: when used alone, it can simply fail, being able to continue with the information we gained would be a great plus. Also, we plan on transferring the full method in \texttt{C++}, to check its actual performances when \texttt{Python} is not used, this will rise questions about the convex programming and arithmetic precision.

The method opens the door for new questions concerning its behavior. A group of questions arises when looking at the volume maximization like a flow: What is the impact of the volume maximization on the triangulation? Which tetrahedra shrink first? Are there combinatorial configurations blocking our method? 
These questions are close to the study of the \emph{combinatorial Ricci flow}~\cite{xu2020combinatorial}, which is dual to our approach, meets similar problems and has known some development in the recent years.
Closely linked to the flow, there are questions on the path we take on path in the Pachner graph: these are quite popular in computational topology and geometry, with the problem to know what is happening at knocking nodes for instance. More generally, our graph is less connected than the topological one, how connected is it?  

There are other questions concerning our results: given the fact that the dataset we used is actually sorted by the volume, does the hyperbolic volume have an impact on the success rate of the methods? How does the approach perform on very large knots that are not on the census? Are the triangulations we output suitable for Weeks' program that computes canonical triangulations from complete hyperbolic structures~\cite{weekscanonical}?

There are also questions about \snappy's method we can look at with ours:  \snappy\ re-triangulation is a large number of 2-3 moves performed at random, why does it miss so much on some specific examples? What makes triangulations difficult to handle for \snappy\ and easy for us, \textit{et vise versa}?

In conclusion, while this work has a concrete and direct use in 3-manifolds study, it raises many interesting questions in mathematics, combinatorics, data science, and algorithmics.

\vfil

\pagebreak

\bibliography{bib}

\end{document}